\setlist[enumerate]{label=({\arabic*})}
\newtheorem{defn}{Definition}[subsection]
\newtheorem{exmp}[defn]{Example}
\newtheorem{rmk}[defn]{Remark}
\newtheorem{prop}[defn]{Proposition}
\newcommand{\N}{\mathbb{N}}
\newcommand{\Z}{\mathbb{Z}}
\DeclareMathOperator*\lowlim{\underline{lim}}
\DeclareMathOperator*\uplim{\overline{lim}}
\date{July 26, 2022}
\begin{document}
\title{On the spectrum of weighted shifts}

\author{Emma D'Aniello and Martina Maiuriello}

\newcommand{\Addresses}{{
  \bigskip
  \footnotesize
  
  E.~D'Aniello,\\
  \textsc{Dipartimento di Matematica e Fisica,\\ Universit\`a degli Studi della Campania ``Luigi Vanvitelli",\\
  Viale Lincoln n. 5, 81100 Caserta, ITALIA}\\
  \textit{E-mail address: \em emma.daniello@unicampania.it}\\
\vspace{0.5cm}

  M.~Maiuriello,\\
  \textsc{Dipartimento di Matematica e Fisica,\\ Universit\`a degli Studi della Campania ``Luigi Vanvitelli",\\
  Viale Lincoln n. 5, 81100 Caserta, ITALIA}\\
  \textit{E-mail address: \em martina.maiuriello@unicampania.it}

}}

\maketitle

\begin{abstract} 
It is well-known that, in Linear Dynamics, the most studied class of linear operators is certainly that of weighted shifts, on the separable Banach spaces $c_0$ and $\ell^p$, $1 \leq p < \infty$. 
Over the last decades, the intensive study of such operators has produced an incredible number of versatile, deep and beautiful results, applicable in various areas of Mathematics and the 
relationships between various important notions, especially concerning chaos and hyperbolic properties, and the spectrum of weighted shifts are investigated. In this paper, 
we investigate the point spectrum of weighted shifts and, under some regularity hypotheses on the weight sequence, we deduce the spectrum. 
\end{abstract}

\let\thefootnote\relax\footnote{2010 {\em Mathematics Subject Classification:} Primary: 47B37;  Secondary: 47A10, 47A05.\\
{\em Keywords:} Weighted Shifts, Linear Dynamics, Spectrum, Point Spectrum.}
\tableofcontents
\newpage

\section{Introduction}
It is known that, in 1929, Birkhoff obtained an example of a linear operator having a dense orbit \cite{Birkhoff}. Some years later, the same has been shown for other two types of 
fundamental operators in analysis: the differentiation operators \cite{Maclane} and the shifts \cite{Rolewicz}. 
Motivated by these three examples and by the fact that density of orbits is one of the main ingredients of chaos, in the twentieth century, many mathematicians began to focus on 
the dynamical properties of linear operators starting from the analysis of the three ones mentioned above, thus leading to the birth of the fascinating area of Mathematics known 
as {\em{Linear Dynamics}}. Over the years, the usefulness of the Birkhoff operators, of the differentiation operators and of the shifts, has become undoubted, making them a model 
for understanding the behaviors of more complex operators.
This is particularly illustrated by the shifts or, more generally, the weighted shifts, whose flexibility and ``simplicity of action'' have made them not only the most studied class of linear 
operators, but also an excellent tool to clearly illustrate definitions and create crucial counterexamples in Linear Dynamics. In particular, weighted shifts are known to be essential for the 
comprehension of some classes of linear operators, among which {\em{composition operators}} \cite{DAnielloDarjiMaiurielloShift}, just to cite one of the most famous and versatile examples.
Hence, starting in 1995 with the characterization of hypercyclic weighted shifts \cite{S}, their dynamical properties have been so studied that, nowadays, the most of the fundamentals of Linear 
Dynamics (like hypercyclicity, mixing, chaos, Li-Yorke chaos, expansivity, hyperbolicity and shadowing) are completely characterized for such operators 
\cite{BernardesCiriloDarjiMessaoudiPujalsJMAA2018, BDP, D'AnielloDarjiMaiuriello, DarjiPires}. See \cite{GrosseErdmann} and \cite{tesiMM2022} for a global overview on the subject. \\
As it is often the case in Operator Theory, it may be useful to know the structure of the spectrum and to impose some conditions on it, to ensure that a certain dynamical property manifests. 
For instance, it is known that an invertible operator $T$ on $\mathbb{C}^n$ is expansive if and only if the point spectrum of $T$ does not intersect  the unit circle ${\mathbb T}$ \cite[Theorem 1]{Eisenberg}. 
Subsequently, as it is well-known that an invertible operator $T$ on a Banach space $X$ is hyperbolic if its spectrum does not intersect ${\mathbb T}$, it was shown 
in \cite[Theorem D]{BernardesCiriloDarjiMessaoudiPujalsJMAA2018} and \cite[Theorem 1]{M} that $T$ is uniformly expansive if and only if the same condition is satisfied by the 
approximate point spectrum, providing, as a corollary, that invertible hyperbolic operators are uniformly expansive. Moreover, the above requirement on the spectrum is a necessary condition to 
get Li-Yorke chaos \cite[Corollary 6]{BermudezBonillaMartinezPeris2011}. As hyperbolicity, also generalized hyperbolicity is defined, for an invertible operator $T$ on a Banach space $X$, in terms of 
the spectrum of certain restrictions of the operators.  These results are just a glimpse of the close relationship between the spectrum of an operator $T$ and its dynamical properties and, also in the specific 
case of weighted shifts, it turns out that the spectrum plays a key rule in the relative theory. \\
\indent
In this paper we focus on weighted shifts on $\ell^p$-spaces: we generalize some results, proved on the Hilbert space $\ell^2$, 
to weighted shifts defined on the Banach space $\ell^p$, $1\leq p <\infty.$  The present article aims, after recalling basic and fundamental results about the spectrum of shifts and weighted shifts 
(Section 2), to investigate the point spectrum of weighted shifts and, under some regularity hypotheses on the weight sequence, to deduce the spectrum (Section 3). \\

Throughout the article, as usual, ${\mathbb N}$ denotes the set of all positive integers; ${\mathbb Z}$ denotes the set of all integers; ${\mathbb D}$ and ${\mathbb T}$ are, 
respectively, the open unit disk and the unit circle in the complex plane $\mathbb C$. 

\section{Definitions and Background Results}

In this section, we fix some terminology and recall some basic notions and results concerning the spectral theory of, first, a bounded linear operator on a complex Banach space 
$X$ and, then, more specifically, of shifts  \cite{Dowson, Kubrusly, Lay}. 
In the sequel, ${\cal{L}}(X)$ denotes the algebra of all bounded linear operators from the complex Banach space $X$ into itself.

\subsection{Spectrum and Spectral Radius}
\begin{defn}
Let $T \in {\cal{L}}(X).$ The {\em spectrum $\sigma (T)$} of the operator $T$ is the set \[ \sigma (T)= \{ \lambda \in {\mathbb C} : T- \lambda I \text{ is not invertible in } {\cal{L}}(X)\}. \] 
\end{defn}

\begin{defn}
Let $T \in {\cal{L}}(X).$ The {\em spectral radius $r(T)$} of the operator $T$ is defined by \[ r(T)= \sup \{ \vert \lambda \vert : \lambda \in \sigma(T)\}. \] 
\end{defn}

It is well-known that the spectral radius $r(T)$ satisfies the {\em spectral radius formula} $r(T)=\lim_{n \rightarrow \infty} \Vert T^n \Vert ^{\frac{1}{n}}$ and hence $r(T) \leq \Vert T \Vert$, 
and that $\sigma(T)$ is a non-empty compact subset of ${\mathbb C}$ contained in the ball $\{ \lambda \in {\mathbb C} : \vert \lambda \vert \leq \Vert T \Vert \}$. Moreover, if $T$ is invertible then 
\[ \sigma (T^{-1})= \left \{ \frac{1}{\lambda} : \lambda \in \sigma (T) \right \}, \] and so \[ \frac{1}{r(T^{-1})} = \inf \{ \vert \lambda \vert : \lambda \in \sigma(T) \}, \] and hence, in this case the spectrum 
$\sigma(T)$ of $T$ lies in the annulus $\{ \lambda \in {\mathbb C} : \frac{1}{r(T^{-1})} \leq \vert \lambda \vert \leq r(T) \}$.\\

We now recall the definitions of some subsets of the spectrum which will be used in the sequel.

\begin{defn} \label{defspectra}
Let $T \in {\cal{L}}(X).$
\begin{enumerate}
\item The set of all $\lambda \in \sigma(T)$ such that $T-\lambda I$ is not one-to-one is called the {\em point spectrum} of $T$ and is denoted by $\sigma _p(T)$. 
It follows that $\sigma _p(T)$ consists of all the {\em eigenvalues} of $T$. \\
\item The set of all $\lambda \in \sigma(T)$ such that $T-\lambda I$ is not Fredholm, (an operator is Fredholm if its range is closed and both its kernel and its cokernel 
are finite-dimensional) is called the {\em essential spectrum} 
of $T$ and is denoted by $\sigma _{e}(T)$.
\end{enumerate}
\end{defn}

The essential spectrum $\sigma_e(T)$ is a closed subset of $\sigma(T)$  and, clearly, it is empty when the space $X$ is finite-dimensional \cite{LaursenNeumann}. Moreover, it is 
invariant under compact perturbation, as the following result shows:  

\begin{prop}{\em \cite[Theorem 4.1]{EdmundsEvans}} \label{propcompact} 
Let $T \in {\cal L}(X)$. If $S$ is a compact operator then \[ \sigma_e(T+S)=\sigma_e(T).\]
\end{prop}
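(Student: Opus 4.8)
The plan is to reduce the claim to \emph{Atkinson's theorem}, which characterizes the Fredholm operators as precisely those that are invertible modulo the compacts. Concretely, I would first recall that the compact operators $\mathcal{K}(X)$ form a closed two-sided ideal of ${\cal L}(X)$, so that the \emph{Calkin algebra} ${\cal L}(X)/\mathcal{K}(X)$ is a unital Banach algebra, and that an operator $A \in {\cal L}(X)$ is Fredholm if and only if its image $\pi(A)$ under the quotient homomorphism $\pi \colon {\cal L}(X) \to {\cal L}(X)/\mathcal{K}(X)$ is invertible; equivalently, there exists $B \in {\cal L}(X)$ (a \emph{parametrix}) with $AB - I$ and $BA - I$ both compact. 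Granting this, the definition of $\sigma_e$ (Definition~\ref{defspectra}) translates into the algebraic identity
\[
\sigma_e(T) = \sigma\big(\pi(T)\big),
\]
where the spectrum on the right is taken in the Banach algebra ${\cal L}(X)/\mathcal{K}(X)$, since $\pi(T - \lambda I) = \pi(T) - \lambda\,\pi(I)$ is invertible exactly when $T - \lambda I$ is Fredholm.

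With this reformulation the proposition becomes a one-line consequence: as $S$ is compact we have $\pi(S) = 0$, hence $\pi(T + S) = \pi(T) + \pi(S) = \pi(T)$, and therefore $\sigma_e(T+S) = \sigma(\pi(T+S)) = \sigma(\pi(T)) = \sigma_e(T)$. If one prefers to keep the argument at the level of operators, the same reasoning can be carried out by hand: given $\lambda \notin \sigma_e(T)$, choose a parametrix $B$ for $T - \lambda I$, so that $B(T - \lambda I) - I$ and $(T - \lambda I)B - I$ are compact; then
\[
B\big((T + S) - \lambda I\big) - I = \big(B(T - \lambda I) - I\big) + BS
\]
and the symmetric expression $\big((T + S) - \lambda I\big)B - I = \big((T - \lambda I)B - I\big) + SB$ are both compact, because $\mathcal{K}(X)$ is an ideal. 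By Atkinson's theorem $(T + S) - \lambda I$ is Fredholm, i.e.\ $\lambda \notin \sigma_e(T + S)$; this shows $\sigma_e(T + S) \subseteq \sigma_e(T)$, and interchanging the roles of $T$ and $T + S$ (noting that $-S$ is again compact) yields the opposite inclusion.

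The only substantial ingredient is Atkinson's theorem together with the fact that $\mathcal{K}(X)$ is a closed two-sided ideal of ${\cal L}(X)$; both are classical, and I would simply quote them. Consequently I do not expect any genuine obstacle in the proof of the proposition itself — the content lies entirely in the machinery being invoked, not in its application here.
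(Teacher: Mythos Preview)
Your argument is correct and is in fact the standard proof of this classical result: invariance of the essential spectrum under compact perturbations follows immediately from Atkinson's theorem via the Calkin algebra, exactly as you outline. Note, however, that the paper does not supply its own proof of this proposition; it is simply quoted from the literature (\cite[Theorem 4.1]{EdmundsEvans}) as a background result, so there is no in-paper argument to compare against.
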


We recall the following useful result:
\begin{prop}{\em \cite[Proposition 1.F]{Kubrusly}} \label{invertible}
Let $T$ be a bounded operator on a Banach space $X$ . If there exists an invertible operator $S:X \rightarrow X$ for which $\Vert S - T\Vert < \Vert S^{-1}\Vert ^{-1}$, then $T$ is itself invertible.
\end{prop}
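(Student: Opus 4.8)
The plan is to realize $T$ as a product of two invertible operators via a Neumann series argument. First I would use the invertibility of $S$ to factor $T$ as $T = S - (S-T) = S\bigl(I - S^{-1}(S-T)\bigr)$, and set $R := S^{-1}(S-T) \in {\cal L}(X)$. The hypothesis then gives $\|R\| \le \|S^{-1}\|\,\|S-T\| < \|S^{-1}\|\cdot\|S^{-1}\|^{-1} = 1$.

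Next I would invoke the standard Neumann series fact: since ${\cal L}(X)$ is a Banach algebra (here the completeness of $X$ is used), the estimate $\sum_{n\ge 0}\|R^n\| \le \sum_{n\ge 0}\|R\|^n = (1-\|R\|)^{-1} < \infty$ shows that $\sum_{n\ge 0} R^n$ converges in ${\cal L}(X)$. Writing $U$ for its sum, a telescoping computation $(I-R)\sum_{n=0}^{N} R^n = I - R^{N+1} = \sum_{n=0}^{N} R^n\,(I-R)$ together with $\|R^{N+1}\| \le \|R\|^{N+1} \to 0$ yields $(I-R)U = U(I-R) = I$, so $I-R$ is invertible in ${\cal L}(X)$ with inverse $U$.

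Finally, $T = S(I-R)$ is the composition of the two invertible operators $S$ and $I-R$, hence is invertible, with $T^{-1} = (I-R)^{-1}S^{-1} = \bigl(\sum_{n\ge 0} R^n\bigr)S^{-1} \in {\cal L}(X)$. I do not expect a genuine obstacle here: the only point requiring care is the convergence of the Neumann series in the operator algebra, which rests on the completeness of $X$; boundedness of the resulting inverse is automatic since it is exhibited as a norm-convergent series of bounded operators.
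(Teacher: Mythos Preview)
Your argument is correct and is precisely the standard Neumann series proof of this classical fact. Note, however, that the paper does not supply its own proof of this proposition: it is stated as a background result with a citation to \cite[Proposition~1.F]{Kubrusly} and no \texttt{proof} environment follows. So there is no ``paper's proof'' to compare against; your proposal simply fills in the omitted (well-known) details, and it does so without gaps.
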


We conclude this paragraph by recalling that the above definitions of spectrum can be extended to any bounded linear operator $T$ acting on a real Banach space $X$ via its complexification 
$T_{\mathbb C}$, in the sense that  we define the spectrum of $T$ as the set of all $\lambda \in \mathbb C$ such that $T_{\mathbb C} - \lambda I$ is not invertible as an operator acting on 
$X_{\mathbb C}$, i.e. we define $\sigma(T)=\sigma(T_{\mathbb C}).$

\subsection{Weighted Shifts}

We recall some preliminary definitions and results.

\begin{defn} 
Let $X=\ell^p({\mathbb N})$, $1 \leq p < \infty$ or $X=c_0({\mathbb N}).$ Let  $w=\{w_n\}_{n \in {\mathbb N}}$ be a bounded sequence of scalars, called {\em weight sequence}.  Then,\\
\begin{itemize}
\item the {\em unilateral weighted forward shift $F_w:X \rightarrow X$} is defined by \[F_w( \{x_n\}_{n \in \mathbb N }) =\{w_{n-1}x_{n-1}\}_{n \in \mathbb N}, \]
meaning $$ F_w(\{x_1,x_2,...\})=\{0,w_1x_1,w_2x_2,...\};$$ 
\item the {\em unilateral weighted backward shift $B_w:X \rightarrow X$} is defined by \[B_w( \{x_n\}_{n \in {\mathbb N}}) =\{w_{n+1}x_{n+1}\}_{n \in {\mathbb N}},\] meaning $$ B_w(\{x_1,x_2,...\})=\{w_2x_2,w_3x_3,...\}.$$
\end{itemize}
If, instead of $\mathbb N$, we consider ${\mathbb Z}$, the shift is called {\em bilateral}. 
\end{defn}
Clearly, a weighted shift (unilateral or bilateral) is injective if and only if none of the weights is zero, and, a bilateral weighted shift is invertible if and only if  $\inf_{n \in \mathbb Z} \vert w_n \vert >0$. 
Of course, a unilateral weighted shift is never invertible. 

\begin{rmk}
Let $w=\{w_n\}_{n \in {\mathbb Z}}$ be a weight sequence with  $\inf_{n \in \mathbb Z} \vert w_n \vert >0$. Obviously, $B_w^{-1}=F_{\tilde{w}}$ and $F_w^{-1}=B_{\tilde{w}}$, where $\tilde{w}=\{\frac{1}{w_n}\}_{n \in \mathbb Z}$.
\end{rmk}

\begin{rmk} \label{rmknorm} Let $w=\{w_n\}_{n \in  {\mathbb Z}}$,  be a weight sequence. Let $T$ be the bilateral weighted shift $F_w$ or $B_w$ on $X=\ell^p(\Z)$, $1 \leq p < \infty$, or $X=c_0(\Z)$. It will be useful for the sequel to note that, for every $n \in {\mathbb N}$,   
\begin{enumerate}
\item{$\Vert T^n \Vert = \sup_{k \in \Bbb Z} \vert w_kw_{k+1} \cdots w_{k+n-1}\vert$}
\item{if $T$ is invertible, then $\Vert T^{-n} \Vert = \sup_{k \in \Bbb Z} \vert w_kw_{k+1} \cdots w_{k+n-1}\vert^{-1}$}
\end{enumerate}
\begin{proof}
We only show $(1)$ for $T=F_w$, as the rest follows in a similar fashion.  Given  $F_w$ on $X=\ell^p(\Z)$, $1 \leq p < \infty$, then
\begin{eqnarray*}
\Vert F_w^n(\{x_k\}_{k \in \Z}) \Vert_p &=& \Vert \{ w_{k-n} \cdots w_{k-1}x_{k-n} \}_{k \in \Z}  \Vert_p =\left( \sum_{k \in \Z} \vert w_{k-n} \cdots w_{k-1}x_{k-n} \vert ^p\right)^{\frac{1}{p}}\\
&\leq& \sup_{k \in \Z} \vert w_{k-n} \cdots w_{k-1} \vert \left( \sum_{k \in \Z} \vert x_{k-n} \vert ^p\right)^{\frac{1}{p}} = \sup_{k \in \Z} \vert w_{k} \cdots w_{k+n-1} \vert \left( \sum_{k \in \Z} \vert x_{k} \vert ^p\right)^{\frac{1}{p}}\\
&=& \sup_{k \in \Z} \vert w_{k} \cdots w_{k+n-1} \vert  \Vert \{x_k\}_{k \in \Z}  \Vert_p, \\
\end{eqnarray*} 
and, on $X=c_0(\Z)$,
\begin{eqnarray*}
\Vert F_w^n(\{x_k\}_{k \in \Z}) \Vert_{\infty} &=& \Vert \{ w_{k-n} \cdots w_{k-1}x_{k-n} \}_{k \in \Z}  \Vert_{\infty} = \sup_{k \in \Z} \vert w_{k-n} \cdots w_{k-1}x_{k-n} \vert \\
&\leq& \sup_{k \in \Z} \vert w_{k-n} \cdots w_{k-1} \vert \sup_{k \in \Z} \vert x_{k-n} \vert = \sup_{k \in \Z} \vert w_{k} \cdots w_{k+n-1} \vert \sup_{k \in \Z} \vert x_{k} \vert \\
&=& \sup_{k \in \Z} \vert w_{k} \cdots w_{k+n-1} \vert  \Vert \{x_k\}_{k \in \Z}  \Vert_{\infty}. \\
\end{eqnarray*} 
Hence,
\begin{eqnarray*}
\Vert F_w^n\Vert &=& \inf_{\{x_k\} \in X} \{ c \geq 0 \ : \ \Vert F_w^n(\{x_k\}_{k \in \Z}) \Vert_X \leq c \Vert \{x_k\}_{k \in \Z} \Vert_X\}\\
&\leq&  \sup_{k \in \Z} \vert w_{k} \cdots w_{k+n-1} \vert .
\end{eqnarray*} 
By computing $F_w^n$ at $e_k=\{\ldots 0,0,\underbracket[0.4pt]{1}_{\clap{\scriptsize$k$-th position}} ,0,0, \ldots\}$, the reverse of the above inequality is obtained. Hence 
\[ \Vert F_w^n \Vert = \sup_{k \in \Bbb Z} \vert w_kw_{k+1} \cdots w_{k+n-1}\vert.\] 
Moreover, if $F_w$ is invertible, an analogous computation gives  
\[\Vert F_w^{-n} \Vert = \sup_{k \in \Bbb Z} \vert w_kw_{k+1} \cdots w_{k+n-1}\vert^{-1}. \]
\end{proof}
\end{rmk}

\begin{rmk}
By replacing $\Z$ by $\N$ in $(1)$ of Remark \ref{rmknorm}, an analogous argument gives the norms of $F_{w}$ and $B_{w}$ in the unilateral case. 
\end{rmk}

\begin{prop}{\em \cite[Proposition 1.6.15]{LaursenNeumann}; \cite[Theorem 4]{Shields}; \cite[Remark 35]{BernardesCiriloDarjiMessaoudiPujalsJMAA2018}}
Let $X=\ell^p({\mathbb N})$, $1 \leq p < \infty$, or $X=c_0({\mathbb N}).$ Let $T$ be the unilateral weighted shift $F_w$ or $B_w$, on $X$. Then, the spectrum 
of $T$ is the disk \[\sigma(T)=\{ \lambda \in {\mathbb C} : \vert \lambda \vert \leq r(T)\}.\]
\end{prop}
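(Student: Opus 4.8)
The plan is to prove the spectrum of a unilateral weighted shift $T$ (forward or backward) on $X = \ell^p(\mathbb N)$ or $c_0(\mathbb N)$ is the closed disk of radius $r(T)$ by showing two containments. The nontrivial one is that every $\lambda$ with $|\lambda| < r(T)$ lies in $\sigma(T)$; combined with $\sigma(T) \subseteq \{|\lambda| \le r(T)\}$ (which holds for any operator since $r(T) = \sup\{|\lambda| : \lambda \in \sigma(T)\}$), the closedness of $\sigma(T)$ forces the full disk. So the heart of the matter is the open disk $\{|\lambda| < r(T)\} \subseteq \sigma(T)$.

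First I would handle the backward shift $B_w$ via its point spectrum. For $|\lambda| < r(T)$ I would try to produce an eigenvector: seeking $x = \{x_n\}$ with $B_w x = \lambda x$, i.e. $w_{n+1} x_{n+1} = \lambda x_n$, one solves recursively $x_{n+1} = \lambda x_n / w_{n+1}$, so $x_n = \lambda^{n-1} x_1 / (w_2 w_3 \cdots w_n)$. The issue is whether this lies in $X$; this works cleanly when $\inf_n |w_n| > 0$ but not in general, so rather than force eigenvalues I would instead argue that $\lambda I - B_w$ fails to be bounded below or fails to be surjective. The cleaner route: show $\lambda I - B_w$ is not surjective for $|\lambda| < r(T)$ by a Neumann-type / geometric-series obstruction, or, more robustly, invoke that $\sigma(T)$ is closed and contains $\sigma_p(T)$, together with the fact — which for the forward shift $F_w$ is immediate — that $0 \in \sigma(F_w)$ always (it is never invertible), and then use rotational structure.

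The key structural fact I would exploit is \emph{rotational symmetry}: for any unimodular $\mu$, the shift $T$ with weights $w$ is unitarily (isometrically) equivalent to $\mu T$ via the diagonal operator $D_\mu : e_n \mapsto \mu^n e_n$, since conjugating a shift by a diagonal merely rescales the weights by unimodular factors, leaving $\|w_k \cdots w_{k+n-1}\|$ and hence $r(T)$ unchanged. Therefore $\sigma(T)$ is invariant under rotation about the origin, so it is a union of circles centered at $0$, determined by which radii occur. Since $r(T) = \sup\{|\lambda| : \lambda \in \sigma(T)\}$ and $\sigma(T)$ is compact, the radius $r(T)$ is attained; by rotation invariance the whole circle $|\lambda| = r(T)$ is in $\sigma(T)$. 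It then suffices to show $\sigma(T)$ is \emph{connected} — equivalently that every radius in $[0, r(T)]$ occurs. For that I would use that $\sigma(T)$ is always connected for a weighted shift (it has no holes: one can cite or reprove that the complement of the spectrum is connected, e.g. via the analyticity of the resolvent and a winding-number/approximation argument on the circles $|\lambda| = \rho$, using the formula in Remark~\ref{rmknorm} for $\|T^n\|$ to control $\|(T - \lambda I)^{-1}\|$), or, more concretely, exhibit for each $0 < \rho < r(T)$ an approximate eigenvector.

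The main obstacle is precisely this last point: ruling out an annular spectrum with a hole around $0$. Concretely, for $0 \le |\lambda| < r(T)$ I would construct approximate eigenvectors for $B_w$: pick, using the spectral radius formula $r(T) = \lim_n \sup_k |w_{k+1}\cdots w_{k+n}|^{1/n}$ (so that for each $\varepsilon$ there are arbitrarily long blocks of weights with product $\gtrsim (r(T)-\varepsilon)^n$), a starting index $k$ and set $x_j := \lambda^{j} / (w_{k+1} \cdots w_{k+j})$ for $0 \le j \le N$ and $0$ otherwise; then $\|(B_w - \lambda I)x\|$ involves only the boundary term at $j = N$, and choosing the block so the weight-products stay large makes $\|(B_w - \lambda I) x\| / \|x\|$ small as $N \to \infty$. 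This shows $\lambda I - B_w$ is not bounded below, hence $\lambda \in \sigma(B_w)$. The forward shift is then dispatched by the analogous (in fact easier, since $F_w$ is never invertible and its adjoint relates to a backward shift) argument, or by noting $\sigma(F_w)$ and $\sigma(B_{w'})$ coincide for the appropriate weight sequence via duality. Assembling: $\{|\lambda| < r(T)\} \subseteq \sigma(T) \subseteq \{|\lambda| \le r(T)\}$ and $\sigma(T)$ closed give $\sigma(T) = \{|\lambda| \le r(T)\}$.
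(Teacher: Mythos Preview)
The paper does not give its own proof of this proposition; it is simply quoted from the literature (Laursen--Neumann, Shields, Bernardes et al.) as a background result. So there is no in-paper argument to compare against, and your attempt has to stand on its own.

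Your overall strategy---rotation invariance of $\sigma(T)$ via the diagonal similarity $D_\mu e_n = \mu^n e_n$, combined with $\sigma(T)\subseteq\{|\lambda|\le r(T)\}$ and closedness---is the standard one and is correct. The substantive step is showing $\{|\lambda|<r(T)\}\subseteq\sigma(T)$, and here your approximate-eigenvector construction has a genuine gap. You pick a block $[k,k+N]$ with large weight-product and set $x_{k+j}=\lambda^j/(w_{k+1}\cdots w_{k+j})$, then assert that $(B_w-\lambda I)x$ ``involves only the boundary term at $j=N$''. But if $k\ge 2$ there is a second boundary contribution at position $k-1$, namely $(B_w x)_{k-1}=w_k x_k=w_k$, which is of order $1$ and is not dominated by $\|x\|$ in general (for instance when all weights in the block exceed $|\lambda|$, the $|x_{k+j}|$ decrease and $\|x\|$ remains bounded). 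If instead you fix $k=1$ to kill the left boundary, you lose the freedom to select a block with large product, and $(w_2\cdots w_n)^{1/n}$ need not approach $r(T)$ at all.

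The gap can be closed, but not with what you wrote. One clean route: for each $|\lambda|<r(T)$, either the formal eigenvector $x_n=\lambda^{n-1}/(w_2\cdots w_n)$ lies in $X$ (then $\lambda\in\sigma_p(B_w)$), or $\sum_n |x_n|^p=\infty$, in which case one must still \emph{argue} that $|x_N|/\|x^{(N)}\|\to 0$ along a subsequence---this is not automatic. Alternatively, and this is essentially what the cited references do, one uses Fredholm index: $F_w$ is Fredholm of index $-1$ at $\lambda=0$, the index is locally constant, and combined with rotation invariance this forces every $\lambda$ with $|\lambda|<r(F_w)$ into $\sigma(F_w)$; the backward case follows by duality. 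Your allusion to ``connectedness via analyticity and winding number'' points in this direction but is too vague as written to count as a proof.
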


\begin{prop}{\em \cite[Theorem 2.1]{Bourhim}; \cite[Theorem 5]{Shields}; \cite[Remark 35]{BernardesCiriloDarjiMessaoudiPujalsJMAA2018}} \label{propspectrumforward2}
Let $X=\ell^p({\mathbb Z})$, $1 \leq p < \infty$, or $X=c_0({\mathbb Z}).$ Let $T$ be the bilateral weighted shift $F_w$ or $B_w$, on $X$. The followings hold. 
\begin{itemize}
\item[a)] If $T$ is non-invertible, then its spectrum is the disk \[\sigma(T)=\{ \lambda \in {\mathbb C} : \vert \lambda \vert \leq r(T)\}.\] 
\item[b)] If $T$ is invertible, then its spectrum is the annulus \[\sigma(T)=\left \{ \lambda \in {\mathbb C} : \frac{1}{r(T^{-1})} \leq \vert \lambda \vert \leq r(T)\right \}.\]
\end{itemize}
\end{prop}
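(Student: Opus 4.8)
The inclusion ``$\subseteq$'' in a) and b) is exactly what was recalled in Subsection 2.1: the spectral radius formula gives $\sigma(T)\subseteq\{\lambda\in\mathbb{C}:|\lambda|\le r(T)\}$, and, when $T$ is invertible, the identity $\sigma(T^{-1})=\{1/\lambda:\lambda\in\sigma(T)\}$ gives in addition $\sigma(T)\subseteq\{\lambda\in\mathbb{C}:|\lambda|\ge 1/r(T^{-1})\}$; by Remark~\ref{rmknorm} one moreover has $r(T)=\lim_n\big(\sup_k|w_k\cdots w_{k+n-1}|\big)^{1/n}$ and, in the invertible case, $1/r(T^{-1})=\lim_n\big(\inf_k|w_k\cdots w_{k+n-1}|\big)^{1/n}$. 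The whole content is thus the reverse inclusion, which I would obtain from three ingredients. First, \emph{rotational invariance}: for $\mu\in\T$ the diagonal operator $D_\mu e_n=\mu^n e_n$ is a surjective isometry of $\ell^p(\Z)$ and of $c_0(\Z)$, and a one-line computation on basis vectors gives $D_\mu^{-1}TD_\mu=cT$ with $|c|=1$ ($c=\mu^{-1}$ if $T=F_w$, $c=\mu$ if $T=B_w$); hence $D_\mu^{-1}(T-\lambda I)D_\mu=c\,(T-c^{-1}\lambda I)$, so $T-\lambda I$ is invertible iff $T-c^{-1}\lambda I$ is, and letting $\mu$ run over $\T$ this shows $\sigma(T)=\mu\,\sigma(T)$ for every $\mu\in\T$ — the spectrum is a union of circles centred at $0$. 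Second, the \emph{endpoints}: in case a) $T$ is non-invertible, so $0\in\sigma(T)$, while compactness of $\sigma(T)$ and $r(T)=\sup\{|\lambda|:\lambda\in\sigma(T)\}$ produce some $\lambda_0\in\sigma(T)$ with $|\lambda_0|=r(T)$, and applying rotational invariance with $\mu=\overline{\lambda_0}/|\lambda_0|$ gives $r(T)=\mu\lambda_0\in\sigma(T)$; in case b) the same argument applied to $\inf\{|\lambda|:\lambda\in\sigma(T)\}=1/r(T^{-1})$ puts the positive real $1/r(T^{-1})$ in $\sigma(T)$. Combining these two ingredients reduces everything to showing that each real $t$ with $0<t<r(T)$ (and $t>1/r(T^{-1})$ in case b)) lies in $\sigma(T)$.

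The remaining, central point I would treat by constructing finitely supported ``bump'' approximate eigenvectors, so as to conclude $t\in\sigma_{ap}(T)\subseteq\sigma(T)$ — and, in the cases where this construction degenerates, by doing the same for the Banach-space adjoint $T'$ on $\ell^q(\Z)$ (or $\ell^1(\Z)$), which is again a weighted shift of the same kind, so that $t\in\sigma_{ap}(T')=\sigma_{su}(T)\subseteq\sigma(T)$. For $T=B_w$, a vector $v$ supported on a finite block $\{a,a+1,\dots,b\}$ with $v_a=1$ and $v_{a+k}=t^k/(w_{a+1}\cdots w_{a+k})$ obeys the eigenvalue recursion on the interior of the block, so
\[(B_w-tI)v=w_a\,e_{a-1}-t\,v_b\,e_b,\qquad \|(B_w-tI)v\|^p=|w_a|^p+t^p|v_b|^p,\]
(with the obvious modification for $c_0$), while $\|v\|^p=\sum_{k=0}^{b-a}|v_{a+k}|^p$. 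Since $t>1/r(T^{-1})$, there are arbitrarily long blocks of consecutive weights with product $\le(t-\eta)^{\mathrm{length}}$ for a fixed $\eta>0$, across which $|v_{a+k}|$ climbs to an exponentially large peak; since $t<r(T)$, there are arbitrarily long blocks with product $\ge(t+\eta')^{\mathrm{length}}$, across which $|v_{a+k}|$ decays geometrically. Choosing the support so that a ``slow'' block (which makes $\|v\|$ enormous) is followed by a ``fast'' block long enough to pull $|v_b|$ back down to $O(1)$, while $|w_a|\le\sup_n|w_n|$ stays bounded, forces $\|(B_w-tI)v\|/\|v\|\to0$. The forward-shift case $T=F_w$ then reduces to the backward one by conjugating with the isometry $e_n\mapsto e_{-n}$, which turns $F_w$ into the bilateral weighted backward shift with reversed weight sequence, so $\sigma(F_w)=\sigma(B_{w'})$ with $w'_n=w_{-n}$.

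The hard part is precisely this bump construction: one must show that \emph{for every} $t$ in the open interval the weight sequence does contain, somewhere, a slow block followed (in the direction relevant to $B_w$) by a sufficiently long fast block — and, when it does not, that the mirror configuration (fast followed by slow) occurs and is exploited by passing to the adjoint $F_w$ — so that between the two constructions one always lands in $\sigma_{ap}(T)\cup\sigma_{ap}(T')=\sigma(T)$. Making this precise, and balancing the two boundary error terms against the interior peak, is where the description of $r(T)$ and $r(T^{-1})$ as limits of suprema and infima of products of \emph{consecutive} weights is used in full force; the few remaining cases, in which $t$ turns out to be a genuine eigenvalue of $T$ or of $T'$, are read off directly from the summability of the geometric-type sequences $t^k/(w_1\cdots w_k)$ and their relatives, and are routine by comparison.
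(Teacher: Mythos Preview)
The paper does not prove this proposition: it is quoted as a background result with references to Bourhim, Shields, and Bernardes--Cirilo--Darji--Messaoudi--Pujals, so there is no in-paper argument to compare against. That said, your plan---rotational invariance of $\sigma(T)$ via the diagonal similarity $D_\mu^{-1}TD_\mu=\mu^{\mp1}T$, then filling in each intermediate radius $t$ by exhibiting finitely supported approximate eigenvectors for $T$ or for its Banach adjoint $T'$---is the classical route of the cited sources, and your rotational-invariance and endpoint steps are correct and cleanly written.

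The genuine gap is exactly where you flag it. You assert the dichotomy that for every $t$ in the open interval one finds either a slow block followed by a fast block (feeding the bump for $B_w$) or the mirror pattern (feeding the bump for $B_w'$), but you do not prove it, and it is not a formality. Take $w_n=2$ for $n\le0$, $w_n=\tfrac12$ for $n\ge1$, $t=1$: on any finite support $[a,b]$ your sequence $|v_{a+k}|=\prod_{i=1}^{k}w_{a+i}^{-1}$ is monotone or V-shaped, so $\|v\|_p$ is comparable to $\max(1,|v_b|)$ while $\|(B_w-I)v\|_p^p=|w_a|^p+|v_b|^p$ is comparable to the same quantity; the ratio never tends to $0$, and one is \emph{forced} to the adjoint side (where, here, $1$ happens to be a genuine eigenvalue of $B_w'$). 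In general, showing that at least one of the two bump constructions succeeds is a combinatorial lemma about the partial sums $S_n=\sum_{i\le n}(\log w_i-\log t)$: from the existence of arbitrarily long stretches of linear increase and of linear decrease one must deduce that $S$ has arbitrarily deep two-sided valleys \emph{or} arbitrarily high two-sided peaks. That lemma is true and is the real content of the argument; until it is written out, what you have is a well-aimed outline rather than a proof.
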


The followings are well-known results which will be useful in the sequel.

\begin{prop}{\em \cite[Exercise 5.2.10]{GrosseErdmann}, \cite[Proposition 1.6.14]{LaursenNeumann}  \cite[Proposition 4]{Shields}}\label{compactshift}
Let $X=\ell^p$, $1 \leq p < \infty$, or $X=c_0.$ Let $T$ be the unilateral (resp. bilateral) weighted  $F_w$ or $B_w$. Then, $T$ is compact if and only if $\lim_{n \rightarrow \infty }
 w_n=0$ $(resp. \lim_{\vert n \vert \rightarrow \infty } w_n=0)$.
\end{prop}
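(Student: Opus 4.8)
The plan is to prove the equivalence directly and self-containedly (rather than tracing through the cited sources), using only the norm formula of Remark~\ref{rmknorm} — and its unilateral analogue recorded just after it — together with two standard facts from operator theory: finite-rank operators are compact, and the compact operators form a norm-closed subspace of $\mathcal{L}(X)$. I will carry out the argument for the bilateral backward shift $B_w$ on $X = \ell^p(\Z)$ ($1 \le p < \infty$) or $X = c_0(\Z)$; the forward shift and the unilateral case follow verbatim, since the only data used are the action $B_w e_k = w_k e_{k-1}$ on the canonical basis vectors and the identity $\|B_w\| = \sup_k |w_k|$ (the instance $n=1$ of Remark~\ref{rmknorm}(1)).

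For the direction ``$\lim_{|n|\to\infty} w_n = 0 \implies T$ is compact'', I truncate the weight: for $N \in \N$ let $w^{(N)}$ be the weight sequence that agrees with $w$ on $\{k \in \Z : |k| \le N\}$ and vanishes elsewhere. Since $B_w$ depends linearly on the weight, $B_w - B_{w^{(N)}} = B_{w - w^{(N)}}$, and Remark~\ref{rmknorm}(1) gives $\|B_w - B_{w^{(N)}}\| = \sup_{|k| > N} |w_k| \to 0$ as $N \to \infty$, precisely because $w_k \to 0$. Each $B_{w^{(N)}}$ has range contained in $\mathrm{span}\{e_{k-1} : |k| \le N\}$, a finite-dimensional subspace, so $B_{w^{(N)}}$ is finite-rank and hence compact; therefore $B_w$, being a norm limit of compact operators, is compact.

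For the converse I argue by contraposition. Suppose $w_n \not\to 0$ as $|n|\to\infty$; then there exist $\delta > 0$ and a strictly monotone sequence of indices $(n_k)_{k\in\N}$ in $\Z$ with $|w_{n_k}| \ge \delta$ for every $k$. The vectors $e_{n_k}$ all have norm $1$, while $B_w e_{n_k} = w_{n_k} e_{n_k - 1}$, and the indices $n_k - 1$ are pairwise distinct. Hence for $j \ne k$,
\[
\|B_w e_{n_j} - B_w e_{n_k}\| = \begin{cases} \bigl(|w_{n_j}|^p + |w_{n_k}|^p\bigr)^{1/p} & \text{in } \ell^p,\\[2pt] \max\bigl(|w_{n_j}|, |w_{n_k}|\bigr) & \text{in } c_0, \end{cases}
\]
which is $\ge \delta$ in either case. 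Thus $\{B_w e_{n_k}\}_k$ has no Cauchy subsequence, so the image of the bounded set $\{e_{n_k} : k \in \N\}$ under $B_w$ is not relatively compact, and $B_w$ is not compact.

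I do not expect a genuine obstacle: the argument is short and uniform over all the spaces $\ell^p$, $1 \le p < \infty$, and $c_0$ — in particular no reflexivity or weak sequential compactness is invoked, which is what lets the case $p=1$ pass as well. The only places needing minor care are the bookkeeping of the index shift when checking that the truncated shift is finite-rank, and, in the converse, choosing $(n_k)$ strictly monotone so that the shifted indices $n_k - 1$ are automatically distinct.
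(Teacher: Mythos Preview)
Your proof is correct and self-contained. Note, however, that the paper does not supply its own proof of this proposition: it is stated with citations to \cite{GrosseErdmann}, \cite{LaursenNeumann}, and \cite{Shields} and then used as a black box, so there is no in-paper argument to compare against. Your approach --- approximating $T$ in operator norm by the finite-rank truncations $B_{w^{(N)}}$ for the ``if'' direction, and exhibiting a bounded sequence $(e_{n_k})_k$ whose image is $\delta$-separated for the ``only if'' direction --- is the standard textbook route and matches what one finds in the cited sources.
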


When the weights $w=\{w_n\}_{n \in  {A}}$, with $A=\mathbb N$ or $A=\mathbb Z$, are such that $w_n =1$ for each $n \in {A}$, then $F_w$ and $B_w$ reduce to the (unweighted) forward and backward shifts 
denoted by $F$ and $B$, respectively. Clearly, $\Vert F \Vert=\Vert B \Vert=1$, and for this simple case the spectrum and its part were completely analyzed, as partially summarized in the following result (for a detailed description of all the parts of the spectrum, see \cite{Takahiro} and 
\cite[Proposition 2.M]{Kubrusly}, for the case $p=2$; and \cite[Corollary 3.2]{Gohberg}; \cite[Example 3.7.7]{LaursenNeumann}; \cite[Proposition 10.2.8]{Cheng}, for the case 
$1 \leq p < \infty$).

\begin{prop}
Let $B$ and $F$ be the unilateral backward and the unilateral forward shift, respectively. The followings hold.
\begin{enumerate}
\item{$\sigma _p(F)=\emptyset$; $\sigma_e(F)={\mathbb T}$};
\item{$\sigma_p(B)={\mathbb D};$ $\sigma_e(B)={\mathbb T}$; }
\item{$\sigma (F)=\sigma(B)=\{ \lambda \in {\mathbb C} : \vert \lambda \vert \leq 1\}$.}
\end{enumerate}
If $B$ and $F$ denote the bilateral backward and the bilateral forward shift, respectively, then
\begin{enumerate}
\item{$\sigma _p(F)=\emptyset$; $\sigma_e(F)={\mathbb T}$};
\item{$\sigma _p(B)=\emptyset$; $\sigma_{e}(B)={\mathbb T}$;}
\item{$\sigma (F)=\sigma(B) ={\mathbb T}$.}
\end{enumerate}
\end{prop}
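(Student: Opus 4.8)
The plan is to handle the three items of each half of the statement separately, noting that item~(3) is essentially a corollary of the results recalled above, so that the real work lies in the point and the essential spectra. For the full spectrum: in the unilateral case the proposition quoted above gives $\sigma(T)=\{\lambda:|\lambda|\le r(T)\}$, and since $\|F^n\|=\|B^n\|=1$ for every $n$ (the unilateral version of Remark~\ref{rmknorm}) the spectral radius formula yields $r(F)=r(B)=1$, whence $\sigma(F)=\sigma(B)=\{\lambda\in\mathbb C:|\lambda|\le1\}$; in the bilateral case $F$ and $B$ are invertible with $F^{-1}=B$, $B^{-1}=F$ and $\|F^{\pm n}\|=\|B^{\pm n}\|=1$, so $r(F)=r(F^{-1})=r(B)=r(B^{-1})=1$ and Proposition~\ref{propspectrumforward2}(b) gives $\sigma(F)=\sigma(B)=\mathbb T$.

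For the point spectra I would simply solve $Tx=\lambda x$ coordinatewise (in any of the spaces $\ell^p$, $1\le p<\infty$, or $c_0$). Unilaterally, $Fx=\lambda x$ forces $\lambda x_1=0$ and $x_{n-1}=\lambda x_n$ for $n\ge2$, hence $x=0$ whether or not $\lambda=0$, so $\sigma_p(F)=\emptyset$; whereas $Bx=\lambda x$ gives $x_n=\lambda^{n-1}x_1$, and the geometric sequence $(\lambda^{n-1})_n$ belongs to the space and is nonzero exactly when $|\lambda|<1$, so $\sigma_p(B)=\mathbb D$. Bilaterally, $Fx=\lambda x$ forces, for $\lambda\neq0$, $|x_n|=|\lambda|^{-n}|x_0|$ for all $n\in\mathbb Z$, which cannot tend to $0$ both as $n\to+\infty$ (which would need $|\lambda|>1$) and as $n\to-\infty$ (which would need $|\lambda|<1$), while $\lambda=0$ forces $x=0$; the mirror computation $|x_n|=|\lambda|^{n}|x_0|$ handles $B$. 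Hence $\sigma_p(F)=\sigma_p(B)=\emptyset$ in the bilateral case.

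For the essential spectra, one inclusion is immediate: $\sigma_e(T)\subseteq\sigma(T)$ and invertible operators are Fredholm, so in the bilateral case $\sigma_e(F)=\sigma_e(B)\subseteq\mathbb T$ at once, and in the unilateral case $\sigma_e(F),\sigma_e(B)\subseteq\{\lambda:|\lambda|\le1\}$. To remove the open disk in the unilateral case I would exploit $BF=I$ and $FB=I-P$, with $P$ the rank-one projection onto the first coordinate, which yield $F-\lambda I=F(I-\lambda B)-\lambda P$ and $B-\lambda I=(I-\lambda F)B-\lambda P$; for $|\lambda|<1$ the factors $I-\lambda B$ and $I-\lambda F$ are invertible (Proposition~\ref{invertible}), so $F(I-\lambda B)$ and $(I-\lambda F)B$ are Fredholm, being compositions of the Fredholm operators $F$, $B$ with invertible ones, and adding the compact operator $-\lambda P$ keeps them Fredholm (Proposition~\ref{propcompact} with $\lambda=0$ there). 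A direct inspection of kernels and ranges shows $\operatorname{ind}F=-1$ and $\operatorname{ind}B=+1$, hence $\operatorname{ind}(F-\lambda I)=-1$ and $\operatorname{ind}(B-\lambda I)=+1$ for every $|\lambda|<1$. This gives $\sigma_e(F),\sigma_e(B)\subseteq\mathbb T$ in all four cases.

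The remaining inclusion $\mathbb T\subseteq\sigma_e(T)$ is the crux, and I would deduce it from two standard facts of Fredholm theory: $\mathbb C\setminus\sigma_e(T)$ is open, and $\lambda\mapsto\operatorname{ind}(T-\lambda I)$ is locally constant on it. In the bilateral case $T-\lambda I$ is invertible, hence Fredholm of index $0$, for all $|\lambda|\neq1$; were some $\lambda_0\in\mathbb T$ Fredholm, a neighbourhood of it would lie in $\mathbb C\setminus\sigma_e(T)$ and meet both $\mathbb D$ and $\{|\lambda|>1\}$, forcing $\operatorname{ind}(T-\lambda_0 I)=0$, so $T-\lambda_0 I$ would be injective (empty point spectrum) of index $0$, hence invertible, contradicting $\lambda_0\in\sigma(T)=\mathbb T$. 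In the unilateral case the index of $F-\lambda I$ equals $-1$ on the connected set $\mathbb D$ and $0$ on $\{|\lambda|>1\}$ (and $+1$, resp.\ $0$, for $B-\lambda I$), so by local constancy no point of $\mathbb T$ can be Fredholm; hence $\sigma_e(F)=\sigma_e(B)=\mathbb T$ in all four cases. The main obstacle is precisely this last passage from ``$\subseteq\mathbb T$'' to ``$=\mathbb T$'': the index argument is short but relies on the topology of the Fredholm index, and a more self-contained route would instead construct, for each $\lambda\in\mathbb T$, a Weyl sequence witnessing that $T-\lambda I$ is not bounded below and argue dually on $\ell^q$ for the cokernel side, at the price of considerably more computation.
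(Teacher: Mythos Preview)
Your argument is correct and reasonably complete. The coordinatewise computations for the point spectra are fine, the full spectra are read off from Proposition~\ref{propspectrumforward2} and the spectral radius formula exactly as you indicate, and your Fredholm--index argument for the essential spectra is the standard one: the identities $F-\lambda I=F(I-\lambda B)-\lambda P$ and $B-\lambda I=(I-\lambda F)B-\lambda P$ are checked by a one-line computation, the index is $\mp 1$ at $\lambda=0$ and hence on all of $\mathbb D$ by local constancy, and the jump of the index across $\mathbb T$ forces $\mathbb T\subseteq\sigma_e$. In the bilateral case your ``index $0$ plus empty point spectrum'' trick is also correct, since a Fredholm operator of index $0$ that is injective is automatically invertible.

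The comparison with the paper is brief: the paper does \emph{not} prove this proposition. It is stated as a classical result, with explicit references to Kubrusly, Gohberg--Goldberg--Kaashoek, Laursen--Neumann, and Cheng for the various parts, and no argument is given in the text. Your write-up therefore supplies what the paper deliberately omits. The route you take (Fredholm index and its local constancy) is the same as in the cited sources; the alternative you mention at the end---exhibiting Weyl sequences for each $\lambda\in\mathbb T$ and handling the cokernel via the $\ell^q$-dual---would also work and is the more ``bare-hands'' version of the same fact, but your index argument is shorter and perfectly adequate here.

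One cosmetic point: when you invoke Proposition~\ref{propcompact} to say that adding the finite-rank operator $-\lambda P$ preserves Fredholmness, you are really using the special case ``$T$ Fredholm $\Leftrightarrow$ $T+K$ Fredholm for $K$ compact'', i.e.\ the statement $0\notin\sigma_e(T)\Leftrightarrow 0\notin\sigma_e(T+K)$. It might read more cleanly to just say ``compact perturbations preserve the Fredholm property and the index'' and cite the same reference, since you need the index stability anyway for the local-constancy step.
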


The definition of an adjoint operator is given, in general, for operators defined on a Hilbert space. If one tries to transfer this definition to operators on Banach spaces, 
it immediately appears an obstacle: the absence of an inner product.  Hence, given a Banach space $X$, it is necessary to introduce the dual space $X^*$, and to define a new product on 
$X \times X^*$, $\langle \cdot,\cdot \rangle$, as in the following definition.

\begin{defn}
Let $X$ be a Banach space and let $X^*$ be its dual. Let $x^* \in X^*$. Then, for each $x \in X$, we define \[\langle x,x^* \rangle=x^*(x).\]
\end{defn}

\begin{defn}{\em{(Adjoint operator)}} \label{defadjoint}
Let $X$ be a Banach space and let $T \in {\cal L}(X)$. The operator $T^*\in {\cal L}(X^*)$ defined by $T^*x^*=x^* \circ T$, that is \[\langle x,T^*x^* \rangle =\langle Tx,x^* \rangle, \hspace{0.3cm} x \in X, x^* \in X^*,\] 
is the {\em adjoint of $T$.} The operator $T$ is said to be {\em{unitary}} if $T^*=T^{-1}$. 
\end{defn}
The following example will be useful in the sequel.

\begin{exmp}
Let $A= \mathbb N$ or $A=\mathbb Z$. As it is well-known, the dual space of $\ell ^p(A)$, $1\leq p < \infty$, is given by $\ell^q(A)$, where $\frac{1}{p}+\frac{1}{q}=1$ for $1< p < \infty$, 
and $q=\infty$ for $p=1$. The continuous linear functionals $x^*$ on $\ell ^p(A)$ are precisely the maps of the form 
\[x^*(x)= \langle x,x^* \rangle= \sum_{n \in A}x_n\overline{y_n}, \tag{$\spadesuit$}\] with $y=\{y_n\}_{n \in A} \in \ell^q(A)$.  \\
Now, let $A=\mathbb Z$ and let $F_w$ be a bilateral weighted forward shift on $ \ell^p({\mathbb Z}) $, $1\leq p < \infty$. Then, according to 
Definition \ref{defadjoint}, its adjoint is a bilateral weighted backward shift on $ \ell^q({\mathbb Z})$ given by \[F_w^*(e_n)=\overline{w_{n-1}}e_{n-1},\] for 
each $n \in \mathbb Z$ (see, for instance, \cite{MillerMillerNeumann}). In fact, given any $x \in \ell^p({\mathbb Z})$ and $x^* \in \ell^q({\mathbb Z}),$ it is
\begin{eqnarray*}
\langle x,F_w^*x^* \rangle &= & \langle F_wx,x^* \rangle = \sum_{n=-\infty}^{+\infty}(F_wx)_n\overline{y_n}= \sum_{n=-\infty}^{+\infty}w_{n-1}x_{n-1}\overline{y_n} \\
&=& \sum_{n=-\infty}^{+\infty}w_{n}x_{n}\overline{y_{n+1}} = \sum_{n=-\infty}^{+\infty}x_{n}\overline{(\overline{w_{n}}{y_{n+1}})} \\
&=&  \sum_{n=-\infty}^{+\infty}x_{n}B_{\tilde{w}}(\overline{y_{n}}) = \langle x,B_{\tilde{w}}x^* \rangle,\hspace{0.3cm} (\text{where $\tilde{w}=\{\tilde{w}_n\}_n$ with $\tilde{w}_n=\overline{w_{n-1}}$}) 
\end{eqnarray*}
and hence $F_w^*(e_n)=B_{\tilde{w}}(e_n)=\tilde{w}_ne_{n-1}=\overline{w_{n-1}}e_{n-1}$, for each $n \in \mathbb Z$.\\
Analogously, if $A=\mathbb N$ and $F_w$ is a unilateral weighted forward shift on $ \ell^p({\mathbb N}) $, $1\leq p < \infty$, then its adjoint is the unilateral weighted backward shift 
on $ \ell^q({\mathbb N})$ given by $F_w^*(e_0)=0$ and $F_w^*(e_n)=\overline{w_{n-1}}e_{n-1}$, for each $n \geq 1$. Similarly,  $B_w^*(e_n)=\overline{w_{n+1}}e_{n+1}$, 
for each $n \geq 1$.

\end{exmp}

\subsection{Similarity of Operators}
The notion of similarity plays an important role in the theory of bounded linear operators (see \cite{Kubrusly, LaursenNeumann}).
  
\begin{defn}[Similarity] \label{defsimilarity}
Let $X$ be a Banach space. Two operators $T,S \in {\cal L}(X)$ are called {\em similar} if there exists an invertible operator $W \in {\cal L}(X)$ such that $S=W^{-1}TW$.
\end{defn}
In Linear Dynamics, the word ``similarity" is replaced by ``conjugation'', a word originally borrowed from Topological Dynamics.
A special similarity on inner product spaces, i.e., on Hilbert spaces, is given by the unitarily equivalence: 

\begin{defn}[Unitarily Equivalence]
Let $\cal H$ be a Hilbert space. Two operators $T,S \in {\cal L}({\cal H})$ are said to be {\em unitarily equivalent} if there exists a unitary operator $U \in {\cal L}({\cal H})$, such that $S=U^{-1}TU$.
\end{defn}

Note that similarity is a less severe definition than the one of unitarily equivalence.
The importance of similarity and unitarily equivalence follows from the fact that they preserve many properties of an operator. It is known that similarity preserves invariant subspaces, 
that is if two operators are similar, and if one has a nontrivial invariant subspace, then so does the other \cite[Proposition 1.J]{Kubrusly}. Moreover, among the invariants of similarity 
the most important are the spectrum and the spectral radius, as the following result shows:

\begin{prop}{\em \cite[Proposition 2.B]{Kubrusly}}
Similarity preserves the spectrum and its parts, and so it preserves the spectral radius also. That is, in particular, if $X$ is a Banach space and $T,S \in {\cal L}(X)$ are two similar operators, then:
\begin{enumerate}
\item $\sigma_p(T)=\sigma_p(S)$;
\item $\sigma(T)=\sigma(S)$;
\item $r(T)=r(S)$.
\end{enumerate}
\end{prop}

Of course, if two operators on an Hilbert space are unitarily equivalent, then they are also similar. The converse, in general, only holds for normal operators \cite[Proposition 3.I]{Kubrusly}. 
Hence, unitarily equivalent operators on Hilbert spaces have, in particular, the same spectrum and the same point spectrum \cite[Theorem A.11]{Pothoven}. 
Moreover, it is well-known that unitary equivalence preserves the operator norm \cite[Proposition 2.B]{Kubrusly}.  The following result about unitarily equivalent weighted shifts on  $\ell^2$ is well-known. 

\begin{prop}{\em \cite[Proposition 6.2]{Conway}; \cite[Corollary 1]{Shields}} \label{propequivalence}
Let $A= \mathbb N$ or $A=\mathbb Z$ and $X=\ell^2(A)$. Let $T$ be a weighted shift $F_w$ (resp. $B_w$), with weights $w=\{w_n\}_{n \in A}$. Then, $T$ is unitarily equivalent to the weighted shift 
$F_{\tilde{w}}$ (resp. $B_{\tilde{w}}$), where $\tilde{w}=\{\vert w_n \vert\}_{n \in A}.$
\end{prop}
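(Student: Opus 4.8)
The plan is to exhibit, in each of the four cases (unilateral or bilateral, forward or backward), an explicit unitary operator $U$ that is diagonal with respect to the canonical orthonormal basis $\{e_n\}_{n\in A}$ of $\ell^2(A)$, and then to verify the unitary equivalence by evaluating $U^{-1}TU$ (where $T=F_w$ or $T=B_w$) and the candidate operator $F_{\tilde w}$ (resp.\ $B_{\tilde w}$) on the basis vectors $e_n$. Since a weighted shift acts monomially on basis vectors, the whole statement reduces to a bookkeeping of phases.

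First I would write each weight in polar form, $w_n=|w_n|\,u_n$ with $u_n\in\T$, adopting the convention $u_n=1$ whenever $w_n=0$. Consider the forward shift, for which $F_we_n=w_ne_{n+1}$. I look for unimodular scalars $\{c_n\}_{n\in A}$ and put $Ue_n=c_ne_n$. Because $|c_n|=1$ for every $n$, the map $U$ sends the orthonormal basis $\{e_n\}$ onto another orthonormal basis $\{c_ne_n\}$ of $\ell^2(A)$, hence extends to a surjective linear isometry, that is, a unitary operator, with $U^{-1}=U^*$ acting by $U^{-1}e_n=\overline{c_n}\,e_n$. Then
\[
U^{-1}F_wU\,e_n=c_n\,U^{-1}\bigl(w_ne_{n+1}\bigr)=c_nw_n\overline{c_{n+1}}\,e_{n+1}=|w_n|\,\bigl(c_nu_n\overline{c_{n+1}}\bigr)\,e_{n+1},
\]
so that $U^{-1}F_wU=F_{\tilde w}$ with $\tilde w_n=|w_n|$ as soon as the $c_n$ satisfy $c_{n+1}=c_nu_n$ (indeed then $c_nu_n\overline{c_{n+1}}=|c_nu_n|^2=1$). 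Such a sequence exists: in the unilateral case ($A=\N$) set $c_1=1$ and $c_{n+1}=c_nu_n$ for $n\ge 1$; in the bilateral case ($A=\Z$) set $c_0=1$ and $c_n=c_{n-1}u_{n-1}$ for every $n\in\Z$, which determines $c_n$ for $n>0$ directly and for $n<0$ via $c_{n-1}=c_n\overline{u_{n-1}}$. In either case every $c_n$ is a finite product of unimodular numbers, hence unimodular, so $U$ is well defined and unitary.

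For the backward shift the reasoning is identical, now with $B_we_n=w_ne_{n-1}$: the same diagonal ansatz $Ue_n=c_ne_n$ gives
\[
U^{-1}B_wU\,e_n=c_nw_n\overline{c_{n-1}}\,e_{n-1}=|w_n|\,\bigl(c_n\overline{c_{n-1}}\,u_n\bigr)\,e_{n-1},
\]
so it suffices to choose unimodular $c_n$ with $c_n=c_{n-1}\overline{u_n}$ (then $c_n\overline{c_{n-1}}u_n=|c_{n-1}|^2|u_n|^2=1$); one takes $c_1=1$ in the unilateral case and $c_0=1$ in the bilateral case and propagates this recursion, exactly as before. This yields $U^{-1}B_wU=B_{\tilde w}$ and settles all the cases; in particular the intertwining operator can be taken unitary, which is the assertion.

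There is essentially no conceptual obstacle here: once the right diagonal unitary has been guessed, the proof is a one-line monomial computation on basis vectors. The only points demanding (routine) care are the polar decomposition of possibly zero weights, the verification that a diagonal operator with unimodular diagonal entries is genuinely unitary — a surjective isometry with $U^{-1}=U^*$ acting entrywise by complex conjugation — and the index bookkeeping needed to solve the defining recursion for $\{c_n\}$ in the bilateral case.
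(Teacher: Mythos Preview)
Your argument is correct. Note, however, that the paper does not supply its own proof of this proposition: it is simply cited from Conway and Shields. That said, your approach---a diagonal operator $U e_n=c_n e_n$ with unimodular entries determined by a one-step recursion coming from the phases of the weights---is exactly the method the paper employs in its proof of the next result, Proposition~\ref{propsimilarity}, which is the $\ell^p$ similarity analogue; so your proof aligns with the paper's strategy for that companion statement.
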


As the following proposition shows, the same result holds for weighted shifts on the Banach space $\ell ^p$, $1\leq p < \infty$, if we consider similarity instead 
of unitarily equivalence. 

\begin{prop}\label{propsimilarity}
Let $A= \mathbb N$ or $A=\mathbb Z$ and $X=\ell^p(A), 1\leq p < \infty.$ Let $T$ be a weighted shift $F_w$ (resp. $B_w$), with weights $w=\{w_n\}_{n \in A}$. 
Then, $T$ is similar to the weighted shift 
$F_{\tilde{w}}$ (resp. $B_{\tilde{w}}$), where $\tilde{w}=\{\vert w_n \vert\}_{n \in A}.$
\end{prop}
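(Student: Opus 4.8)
The plan is to construct the conjugating invertible operator $W$ explicitly as a diagonal operator on $\ell^p(A)$, chosen so that it absorbs the phases of the weights. First I would write each nonzero weight in polar form, $w_n = |w_n| e^{i\theta_n}$ (setting, say, $\theta_n = 0$ whenever $w_n = 0$, although the interesting case is when $T$ is injective so all weights are nonzero), and then define a sequence of unimodular scalars $\{d_n\}_{n \in A}$ recursively so that the cumulative product of phases telescopes. Concretely, for the forward shift $F_w$ one wants $W^{-1} F_w W = F_{\tilde w}$ with $\tilde w_n = |w_n|$; writing $W e_n = d_n e_n$ for unimodular $d_n$, a direct computation of both sides on the basis vector $e_n$ gives the requirement $d_{n+1} w_n d_n^{-1} = |w_n|$, i.e. $d_{n+1} = e^{i\theta_n} d_n$. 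One then fixes a base index (for $A = \mathbb{N}$ take $n=1$ and set $d_1 = 1$; for $A = \mathbb{Z}$ fix $d_0 = 1$ and solve the recursion in both directions) and defines $d_n$ as the appropriate finite product of the $e^{i\theta_k}$'s.

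Next I would verify that the resulting diagonal operator $W \colon \{x_n\} \mapsto \{d_n x_n\}$ is bounded and invertible on $\ell^p(A)$: since every $d_n$ has modulus $1$, $W$ is an isometry of $\ell^p(A)$ onto itself (this uses only $|d_n x_n| = |x_n|$ termwise, hence equality of the $\ell^p$-norms), and its inverse is the diagonal operator with entries $\overline{d_n} = d_n^{-1}$, which is bounded for the same reason. Thus $W \in \mathcal{L}(X)$ is invertible with $W^{-1} \in \mathcal{L}(X)$, so Definition \ref{defsimilarity} applies. Then I would check the conjugation identity $W^{-1} F_w W = F_{\tilde w}$ by evaluating both operators on the canonical basis $\{e_n\}$ (which is enough by linearity and continuity, as these vectors have dense span): $W^{-1} F_w W e_n = W^{-1} F_w (d_n e_n) = W^{-1}(d_n w_n e_{n+1}) = d_n w_n \overline{d_{n+1}}\, e_{n+1} = |w_n| e_{n+1} = F_{\tilde w} e_n$, using the recursion $d_{n+1} = e^{i\theta_n} d_n$. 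The backward shift case is entirely analogous, with the recursion reading $d_{n} w_{n+1} \overline{d_{n+1}} = |w_{n+1}|$, solved the same way.

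I do not expect any serious obstacle here; the argument is a routine "gauge transformation" and the only mild point of care is the bilateral case, where the recursion must be solved for both positive and negative indices from the fixed base point $d_0 = 1$ — but since each step multiplies by a unimodular factor, the modulus $|d_n| = 1$ is preserved in both directions, so boundedness and invertibility of $W$ are never in question regardless of whether $\inf_n |w_n| > 0$. It is worth remarking that, compared with Proposition \ref{propequivalence}, the operator $W$ here is still a unitary-type (isometric) diagonal operator, but on $\ell^p$ for $p \neq 2$ isometries need not preserve an inner product, so the correct notion to invoke is similarity rather than unitary equivalence; nonetheless the same explicit $W$ does the job.
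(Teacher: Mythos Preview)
Your proposal is correct and follows essentially the same approach as the paper: both construct an explicit diagonal conjugating operator with unimodular entries, defined recursively so as to absorb the phases of the weights, and then observe that unimodularity yields boundedness and invertibility on $\ell^p(A)$. The only cosmetic differences are that the paper writes the intertwining relation as $WF_w = F_{\tilde w}W$ (so your $W$ is the paper's $W^{-1}$), and the paper handles zero weights by a case split rather than by your convention $\theta_n=0$; both treatments are equivalent.
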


\begin{proof}
The proof is showed only for $T=F_w$, as small changes give it for $T=B_w$. Hence, let $T=F_w$.
According to Definition \ref{defsimilarity}, we need to find an invertible operator $W \in {\cal L}(X)$ with $F_{w}=W^{-1}F_{\tilde{w}}W,$ i.e. such that \[WF_{w}=F_{\tilde{w}}W.  \tag{$\bullet$}\]
In order to do that, let us consider the operator $T:X \rightarrow X$ given by $T(e_n)=\gamma_n e_n,$ where $\gamma=\{\gamma_n\}_{n \in A}$ is a bounded sequence of scalars. \\
Note that, for each $\{x_n\}_{n \in A} \in X$, 
\[TF_{w}(\{x_n\}_{n \in A})=T(\{w_{n-1}x_{n-1}\}_{n \in A})=\{\gamma_n w_{n-1}x_{n-1}\}_{n \in A},\] and 
\[F_{\tilde{w}}T(\{x_n\}_{n \in A})=F_{\tilde{w}}(\{\gamma_{n}x_{n}\}_{n \in A})=\{\tilde{w}_{n-1}\gamma_{n-1}x_{n-1}\}_{n \in A}.\] 
Therefore, $TF_{w}=F_{\tilde{w}}T$ if and only if the sequence $\gamma=\{\gamma_n\}_{n \in A}$ satisfies 
\[\gamma_n w_{n-1}x_{n-1}=\tilde{w}_{n-1}\gamma_{n-1}x_{n-1}, \forall n \in A.\tag{$\clubsuit$}\]
Therefore, choosing $\gamma_0=1$ and taking, for $n>0$ 
\begin{equation*}
\gamma_n = \begin{cases}
1 &\text{if $w_{n-1}=0$}\\
\frac{\tilde{w}_{n-1}}{w_{n-1}}\gamma_{n-1} &\text{if $w_{n-1}\neq 0$}
\end{cases}
\end{equation*}
and, for $n\leq0$
\begin{equation*}
\gamma_{n-1} = \begin{cases}
1 &\text{if $\tilde{w}_{n-1}=0$}\\
\frac{w_{n-1}}{\tilde{w}_{n-1}}\gamma_n &\text{if $\tilde{w}_{n-1}\neq 0$},
\end{cases}
\end{equation*}
we have that such a sequence $\{\gamma_n\}_{n \in A}$ satisfies $(\clubsuit)$, that is $TF_{w}=F_{\tilde{w}}T$. \\
Moreover, note that:
\begin{itemize}
\item[$\bullet$]{$T$ is linear;}
\item[$\bullet$]{$T$ is bounded since, for each $n$, $\vert \gamma_n \vert=1$ (as, by hypothesis, $\vert {w}_{n-1} \vert= \vert \tilde{w}_{n-1} \vert$), and then $\Vert T \Vert=\sup_{n \in A}\vert \gamma_n \vert=1$;}
\item[$\bullet$]{$T$ is invertible, as $\inf_{n \in A}\vert \gamma_n\vert =1>0$.  In particular, $T^{-1}(e_n)=\frac{1}{\gamma_n}e_n$.}
\end{itemize}
Taking $W=T$ we obtain an invertible operator $W \in {\cal L}(X)$ such that $WF_{w}=F_{\tilde{w}}W,$ and hence, by Definition \ref{defsimilarity}, $F_{w}$ and $F_{\tilde{w}}$ are similar.
\end{proof}

\section{On the Spectrum of Weighted Shifts}

Up to now, we have not made assumptions about the sign of the weights.
It is well-known, (see \cite[p. 54]{Conway}; \cite[p. 56]{Shields}), that if a weighted shift has a finite number of zero weights, then it is the direct sum of a finite number of finite 
dimensional operators and a weighted shift with nonzero weights. Hence, weighted shifts with a finite number of zero weights lead back to weighted shifts with all weights non-zero. 
For this reason, and by Proposition \ref{propequivalence} and Proposition \ref{propsimilarity}, from now on we only consider positive weights $\{w_n\}_{n \in A}$. \\

In the previous section, Proposition \ref{propspectrumforward2} describes the spectrum of a weighted shifts with $w=\{w_n\}_{n \in A}$, $A={\mathbb N}$ or $A={\mathbb Z}$, a bounded sequence of scalars. 
In this section, we will see more detailed results on the spectrum of $F_w$ and $B_w$, both in the unilateral case and in the bilateral case.  

\subsection{The Unilateral Case}

\begin{prop} \label{propunilat}
Let $T$ be a unilateral weighted shift $F_w$ or $B_w$ on $\ell^p({\mathbb N})$, $1\leq p < \infty$, with $\{w_n\}_{n \in \mathbb N}$ a bounded sequence of positive reals. Let \[ \lim_{n \rightarrow \infty} w_n=w. \] 
Then, the spectral radius of $T$ is \[r(T)=w.\]
\end{prop}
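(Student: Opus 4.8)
The plan is to read off $r(T)$ directly from the spectral radius formula $r(T)=\lim_{n\to\infty}\|T^n\|^{1/n}$, combined with the explicit norm of the iterates. By the unilateral analogue of Remark~\ref{rmknorm}(1), for every $n\in\mathbb N$ one has
\[
\|T^n\|=\sup_{k}\, w_k w_{k+1}\cdots w_{k+n-1},
\]
the supremum running over the admissible starting indices $k$ (bounded below by $1$ or $2$ according to whether $T=F_w$ or $T=B_w$). So everything reduces to showing that
\[
\lim_{n\to\infty}\Bigl(\sup_{k}\, w_k w_{k+1}\cdots w_{k+n-1}\Bigr)^{1/n}=w .
\]

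For the upper bound, I would fix $\varepsilon>0$, choose $N$ with $w_j<w+\varepsilon$ for all $j\ge N$, and set $M=\sup_j w_j<\infty$. Given any $n$ and any admissible $k$, I split the $n$ factors of $w_k\cdots w_{k+n-1}$ into those of index $<N$ (there are at most $N-1$ of them, each $\le M$) and those of index $\ge N$ (each $\le w+\varepsilon$); this yields $w_k\cdots w_{k+n-1}\le C_\varepsilon\,(w+\varepsilon)^n$, where $C_\varepsilon:=\max\{1,(M/(w+\varepsilon))^{\,N-1}\}$ depends on $\varepsilon$ but not on $n$ or $k$. Hence $\|T^n\|^{1/n}\le C_\varepsilon^{1/n}(w+\varepsilon)\to w+\varepsilon$, and letting $\varepsilon\to 0$ gives $r(T)\le w$.

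For the lower bound I may assume $w>0$, since $r(T)\ge 0$ always. Fixing $0<\varepsilon<w$ and $N$ with $w_j>w-\varepsilon$ for all $j\ge N$, I take $k=N$ in the supremum to get $\|T^n\|\ge w_N w_{N+1}\cdots w_{N+n-1}>(w-\varepsilon)^n$, so that $\|T^n\|^{1/n}>w-\varepsilon$ for every $n$, whence $r(T)\ge w-\varepsilon$; letting $\varepsilon\to 0$ yields $r(T)\ge w$. Combining the two inequalities gives $r(T)=w$.

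The only point requiring care is the upper bound: one must check that the finitely many ``boundary'' weights of index $<N$ contribute merely a multiplicative constant $C_\varepsilon$ that does not grow with $n$, so that it disappears after extracting $n$-th roots. The rest is a routine combination of the spectral radius formula with the norm formula of Remark~\ref{rmknorm}.
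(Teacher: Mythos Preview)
Your argument is correct. The upper bound is handled the same way as in the paper: both bound the tail weights by $w+\varepsilon$ and absorb the finitely many initial weights into a constant that vanishes under $n$-th roots.

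The genuine difference is in the lower bound. You obtain $r(T)\ge w$ by the obvious elementary move: for $0<\varepsilon<w$ pick $N$ with $w_j>w-\varepsilon$ for $j\ge N$, and evaluate the supremum in $\|T^n\|$ at $k=N$ to get $\|T^n\|\ge(w-\varepsilon)^n$. The paper instead writes $F_w=wF+F_{\tilde w}$ with $\tilde w_n=w_n-w\to 0$; by Proposition~\ref{compactshift} the perturbation $F_{\tilde w}$ is compact, so Proposition~\ref{propcompact} gives $\sigma_e(F_w)=\sigma_e(wF)=w\mathbb T$, and since $\sigma_e(F_w)\subseteq\sigma(F_w)$ one concludes $r(F_w)\ge w$. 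Your route is shorter and entirely self-contained, needing only the norm formula and the spectral radius formula; the paper's route is less direct but yields the additional information $\sigma_e(T)=w\mathbb T$ along the way.
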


\begin{proof}
The proof is showed for $T=F_w$, as small changes provide the case $T=B_w$. Hence, consider $T=F_w$.
Let $\epsilon >0$ and $\overline{n} \in \mathbb N$ such that $\vert w_n - w \vert < \epsilon$ for each $n\geq \overline{n}$. Then, in particular, for each $n> \overline{n}$, 
\[0< w_{\overline{n}} \cdots w_{n-1} < (w + \epsilon)^{n-\overline{n}}. \]
Therefore, for each $k \in \mathbb N$ and for each $n> \overline{n}$, 
\[0 < w_{k+\overline{n}} \cdots w_{k+n-1} < (w + \epsilon)^{n-\overline{n}}, \]
and, hence, 
\[ 0 < w_{k} \cdots w_{k+\overline{n}-1}w_{k+\overline{n}} \cdots w_{k+n-1} <  w_{k} \cdots w_{k+\overline{n}-1}(w + \epsilon)^{n-\overline{n}}. \]
Then, it follows that
\begin{eqnarray*}
0 \leq r(F_w)= \lim_{n \rightarrow \infty} \Vert F^n_w \Vert^{\frac{1}{n}}&=&\lim_{n \rightarrow \infty}(\sup_{k \in \mathbb N} \vert w_k \cdots w_{k+n-1}\vert)^{\frac{1}{n}}\\
&\leq& \lim_{n \rightarrow \infty}[\sup_{k \in \mathbb N} ( w_k \cdots w_{k+\overline{n}-1})]^{\frac{1}{n}}(w +\epsilon)^{1-\frac{\overline{n}}{n}} \\
&\leq& \lim_{n \rightarrow \infty}(\sup_{k \in \mathbb N}  w_k)^{\frac{\overline{n}}{n}}(w +\epsilon)^{1-\frac{\overline{n}}{n}} \\
&\leq& \lim_{n \rightarrow \infty}M^{\frac{\overline{n}}{n}}(w +\epsilon)^{1-\frac{\overline{n}}{n}}\hspace{0.3cm} (M=\max\{\sup_{k \in \mathbb N} w_k, w+\epsilon\})\\
&=&w +\epsilon,
\end{eqnarray*}
If $w=0$, then, as $\epsilon$ is arbitrary, it follows that $r(F_w)=0=w.$\\
Now, assume that $w>0$. From the previous computation we obtain that $r(F_w)\leq w$. We want to show that $r(F_w)\geq w.$
Let $F$ denote the unilateral  forward shift on $\ell^p({\mathbb N})$, $1\leq p < \infty$, i.e. \[F(\{x_n\}_{n \in \mathbb N})=\{x_{n-1}\}_{n \in \mathbb N}.\]
Consider the operator $F_{\tilde{w}}=F_w-wF$ defined by: 
\[ F_{\tilde{w}}(\{x_n\}_{n \in \mathbb N})=(F_w-wF)(\{x_n\}_{n \in \mathbb N})=\{(w_{n-1}-w)x_{n-1}\}_{n \in \mathbb N}.\] 
Hence, $F_{\tilde{w}}$ is the unilateral weighted forward shift on  $\ell^p({\mathbb N})$, $1\leq p < \infty$, with weights $\tilde{w}=\{w_n-w\}_{n \in \mathbb N}$.
In our case, the hypothesis 
\[\lim_{n \rightarrow \infty} w_n=w\] 
implies that 
\[\lim_{n \rightarrow \infty} \tilde{w_n}=0,\]
which means, by Proposition \ref{compactshift}, that $F_{\tilde{w}}$ is compact.
Therefore, from Proposition \ref{propcompact} it follows that:
\begin{eqnarray*}
\sigma_e(F_w)=\sigma_e(wF+(F_w-wF))&=&\sigma_e(wF+F_{\tilde{w}})\\
&=&\sigma_e(wF) \hspace{0.5cm} \text{(as $F_{\tilde{w}}$ is compact)}\\
&=& w{\mathbb T} \hspace{0.5cm} \\
&=&\{ \lambda : \vert \lambda\vert=w\}.
\end{eqnarray*}
As $\sigma_e(F_w)$ is a closed subset of $\sigma(F_w)$, then it follows \[r(F_w)=\sup\{\vert \lambda \vert, \lambda \in \sigma(F_w)\}\geq w\]
Hence, the spectral radius of $F_w$ is $r(F_w)=w$.
\end{proof}

\begin{rmk} \label{rmkexmp}
Let $X=\ell^p({\mathbb N}) $, $1\leq p < \infty$, and let $T$ denote a unilateral weighted shift $F_w$ or $B_w$, on $X$, 
with $\{w_n\}_{n \in \mathbb N}$ a positive weight sequence. Let 
\[ \lim_{n \rightarrow \infty} w_n=w. \] 
Then, by $(1)$ of Remark \ref{rmknorm} and Proposition \ref{propunilat}, together with the spectral radius formula, we get
 \begin{eqnarray*}
 r(T)= w = \lim_{n \rightarrow \infty} w_n = \lim_{n \rightarrow \infty} {\Vert T^n \Vert}^{\frac{1}{n}} &=&  \lim_{n \rightarrow \infty}  
 {\left(\sup_{k \in \Bbb N} \vert w_kw_{k+1} \cdots w_{k+n-1}\vert\right)}^{\frac{1}{n}}\\
 &=&\lim_{n \rightarrow \infty}  
 {\left[\sup_{k \in \Bbb N}( w_kw_{k+1} \cdots w_{k+n-1}) \right]}^{\frac{1}{n}}.
 \end{eqnarray*}
Take $w_{n} = 3^{{(-1)}^{n}}$. Then
\[\lim_{n \rightarrow \infty}  
 {\left[\sup_{k \in \Bbb N}( w_kw_{k+1} \cdots w_{k+n-1}) \right]}^{\frac{1}{n}}=1\]
but, clearly,  $\{w_{n}\}_{n \in \mathbb N}$ is not regular, with $\varlimsup_{n\rightarrow \infty} w_{n} = 3$ and  $\varliminf_{n\rightarrow \infty} w_{n} = \frac{1}{3}$. In particular, Proposition \ref{propunilat} 
does not hold anymore if we replace $\lim$ with $\varlimsup$.
\end{rmk}

We point out that, in the Hilbert case $p=2$,  Proposition \ref{propunilat} is a consequence of \cite[Proposition 15]{Shields} and \cite[Proposition 6.8 (a)]{Conway}. 
As in \cite[Remark 1.2]{FaourKhalil} and  \cite[Theorem 8]{Shields} for the Hilbert case $p=2$, the point spectrum of a unilateral forward weighted shift on 
$\ell ^p({\mathbb N})$, $1\leq p < \infty$, turns out to be empty, and the point spectrum of a unilateral backward weighted shift on $\ell ^p({\mathbb N})$, 
$1\leq p < \infty$, satisfies  
\[\{0\} \cup \{\lambda \in {\mathbb C}: \vert \lambda \vert <  \tilde{r}\}  \subseteq \sigma_{p}(B_{w}) \subseteq \{\lambda \in {\mathbb C}: \vert \lambda \vert \leq  \tilde{r}\}\]
where $\tilde{r} = \lowlim_{n \rightarrow \infty} {\left(w_{2} \cdots w_{n+1}\right)}^{\frac{1}{n}}.$
The proofs for the general case $1\leq p < \infty$ are analogous to the ones for $p=2$.

\begin{prop}
Let $F_w: \ell^p({\mathbb N}) \rightarrow \ell^p({\mathbb N}) $, $1\leq p < \infty$, be a unilateral weighted forward shift with $\{w_n\}_{n \in \mathbb N}$ a bounded positive 
weight sequence. Then $\sigma _p(F_w)=\emptyset$.
\end{prop}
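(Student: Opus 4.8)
The plan is to read off what the eigenvalue equation forces, coordinate by coordinate. Suppose, toward a contradiction, that $\lambda \in \sigma_p(F_w)$, and let $x=\{x_n\}_{n \in \mathbb N} \in \ell^p(\mathbb N)$, $x \neq 0$, satisfy $F_w x = \lambda x$. By the definition of $F_w$, the $n$-th coordinate of $F_w x$ equals $0$ for $n=1$ and $w_{n-1}x_{n-1}$ for $n \geq 2$; hence the eigenvalue equation is equivalent to $\lambda x_1 = 0$ together with $w_{n-1}x_{n-1} = \lambda x_n$ for every $n \geq 2$.

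First I would dispose of the case $\lambda = 0$: since all weights are positive, $F_w$ is injective (a weighted shift is injective precisely when no weight vanishes, as already noted), so $0 \notin \sigma_p(F_w)$; alternatively, the relations $w_{n-1}x_{n-1}=0$ with $w_{n-1}>0$ directly force $x_{n-1}=0$ for all $n \geq 2$, i.e. $x=0$.

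If $\lambda \neq 0$, then $\lambda x_1 = 0$ gives $x_1 = 0$, and the recursion $x_n = \lambda^{-1} w_{n-1} x_{n-1}$ propagates this upward: by induction on $n$ one gets $x_n = 0$ for every $n \in \mathbb N$. In either case $x = 0$, contradicting the choice of $x$. Hence no $\lambda$ can be an eigenvalue, and $\sigma_p(F_w) = \emptyset$.

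I do not anticipate any real obstacle: the statement is a direct unwinding of the action of the forward shift, and it uses only that the weights are nonzero — positivity, boundedness, and the hypothesis on $\lim_{n\to\infty} w_n$ from the surrounding discussion play no role. One may also summarize the argument conceptually: $F_w$ strictly raises the minimal index of the support of any nonzero vector, so it cannot fix (a scalar multiple of) any nonzero vector.
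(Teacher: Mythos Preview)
Your proof is correct and follows essentially the same approach as the paper: write out the eigenvalue equation coordinatewise, use positivity of the weights to rule out $\lambda=0$ via injectivity, and then use $\lambda x_1=0$ together with the recursion to force $x=0$. The only cosmetic difference is that the paper first excludes $\lambda=0$ and then handles the remaining case, whereas you treat the two cases in parallel.
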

\begin{proof}
By contradiction, let  $\lambda \in \sigma _p(F_w)$. Then, there exists $\{ x_n\}_{n \in \mathbb N} \in \ell^p(\mathbb N)\setminus \{{0}\}$ eigenvector of $F_w$ corresponding to the 
eigenvalue $\lambda$, i.e., $F_w(\{x_n\}_{n \in \mathbb N})=\{\lambda x_n\}_{n \in \mathbb N}$. By definition, $F_w(\{x_n\}_{n \in \mathbb N})=\{w_{n-1}x_{n-1}\}_{n \in \mathbb N}$ and, hence, 
the coordinates of $\{ x_n\}_{n \in \mathbb N}$ are such that \[\lambda x_1=0 \,\, \text{ and } \,\, w_{n-1} x_{n-1}=\lambda x_{n} \text{ for each } n \geq 2. \tag{$\star$}\] 
By hypothesis, $w_n>0$ for each $n \in {\mathbb N}$, therefore $F_w$ is injective and then $\lambda \neq 0$. Hence, it follows from $(\star)$ that $x_n=0$ for each $n \in {\mathbb N}$. 
This is a contradiction and so it must be $\sigma _p(F_w)=\emptyset$.
\end{proof}

\begin{prop}
Let $B_w: \ell^p({\mathbb N}) \rightarrow \ell^p({\mathbb N}) $, $1\leq p < \infty$, be a unilateral weighted forward shift with $\{w_n\}_{n \in \mathbb N}$ a bounded positive 
weight sequence. Then
\[\{0\} \cup \{\lambda \in {\mathbb C}: \vert \lambda \vert 
<  \tilde{r}\}  \subseteq \sigma_{p}(B_{w}) \subseteq \{\lambda \in {\mathbb C}: \vert \lambda \vert 
\leq  \tilde{r}\}\]
where $\tilde{r} = \lowlim_{n \rightarrow \infty} {\left(w_{2} \cdots w_{n+1}\right)}^{\frac{1}{n}}.$
\end{prop}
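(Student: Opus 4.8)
The plan is to analyze the eigenvalue equation $B_w(\{x_n\}_{n\in\mathbb N}) = \{\lambda x_n\}_{n\in\mathbb N}$ directly and determine for which $\lambda$ it admits a nonzero solution in $\ell^p(\mathbb N)$. Writing out the action of $B_w$, the equation becomes $w_{n+1}x_{n+1} = \lambda x_n$ for all $n\geq 1$, so once $x_1$ is fixed the whole sequence is determined recursively: for $\lambda\neq 0$ we get $x_{n+1} = \frac{\lambda}{w_{n+1}} x_n$, hence $x_{n+1} = \lambda^n \frac{x_1}{w_2 w_3\cdots w_{n+1}}$ (with the obvious conventions). First I would treat $\lambda = 0$ separately: then $x_1$ is free and $x_n = 0$ for $n\geq 2$, which gives a genuine eigenvector, so $0\in\sigma_p(B_w)$, explaining the isolated $\{0\}$ in the claimed inclusion.

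For $\lambda\neq 0$, the eigenvector lies in $\ell^p(\mathbb N)$ if and only if $\sum_{n\geq 1}|x_{n+1}|^p < \infty$, i.e. (taking $x_1 = 1$) if and only if
\[
\sum_{n\geq 1} \frac{|\lambda|^{np}}{(w_2 w_3\cdots w_{n+1})^p} < \infty.
\]
The next step is to connect the convergence of this series to the quantity $\tilde r = \varliminf_{n\to\infty}(w_2\cdots w_{n+1})^{1/n}$ via the root test: the $n$-th root of the $n$-th term is $|\lambda|^p \big/ (w_2\cdots w_{n+1})^{p/n}$, whose $\limsup$ equals $|\lambda|^p\big/\big(\varliminf_n (w_2\cdots w_{n+1})^{1/n}\big)^p = (|\lambda|/\tilde r)^p$. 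Hence if $|\lambda| < \tilde r$ the root test gives convergence, so $\lambda$ is an eigenvalue; and if $|\lambda| > \tilde r$ the root test gives a term not tending to $0$ (a subsequence of $n$-th roots exceeds $1$), so the series diverges and $\lambda\notin\sigma_p(B_w)$. This yields exactly
\[
\{0\}\cup\{\lambda : |\lambda| < \tilde r\} \subseteq \sigma_p(B_w) \subseteq \{\lambda : |\lambda| \leq \tilde r\},
\]
the boundary circle $|\lambda| = \tilde r$ being the ambiguous case the root test cannot resolve, which is why the statement is phrased with these two nested inclusions rather than an equality.

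The one point requiring a little care — and the main (mild) obstacle — is the interchange of $\limsup$ with the reciprocal and $p$-th power in passing from $\varliminf_n(w_2\cdots w_{n+1})^{1/n}$ to $\limsup_n (w_2\cdots w_{n+1})^{-p/n}$; since $t\mapsto t^{-p}$ is continuous and decreasing on $(0,\infty)$ this is routine, but one should note that $\tilde r$ could in principle be $0$, in which case the left-hand set is just $\{0\}$ and there is nothing more to prove for $\lambda\neq 0$ (the series diverges for every such $\lambda$), and one should check the positivity of the partial products so that all the manipulations with roots and reciprocals are legitimate (this follows from $w_n>0$ for all $n$, as assumed). I would also remark explicitly that $\tilde r \leq r(B_w)$ is consistent with $\sigma_p(B_w)\subseteq\sigma(B_w)$, so the result refines the spectral picture from Proposition~\ref{propspectrumforward2}.
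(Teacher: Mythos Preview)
Your proposal is correct and follows essentially the same approach as the paper: derive the recursion $w_{n+1}x_{n+1}=\lambda x_n$, write the candidate eigenvector explicitly as $x_{n+1}=\lambda^n/(w_2\cdots w_{n+1})$ (with $x_1=1$), and decide membership in $\ell^p$ via the root test (the paper phrases this as the Cauchy--Hadamard criterion). The only cosmetic differences are that you treat $\lambda=0$ separately and comment on the edge case $\tilde r=0$, whereas the paper absorbs both into the single series computation.
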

\begin{proof}
Let $\lambda \in \sigma_p(B_w)$. Then, there exists $\{ x_n\}_{n \in \mathbb N} \in \ell^p(\mathbb N)\setminus \{{0}\}$ eigenvector of $B_w$ corresponding to the 
eigenvalue $\lambda$, i.e., $B_w(\{x_n\}_{n \in \mathbb N})=\{\lambda x_n\}_{n \in \mathbb N}$. By definition, $B_w(\{x_n\}_{n \in \mathbb N})=\{w_{n+1}x_{n+1}\}_{n \in \mathbb N}$ and, hence, 
the coordinates of $\{ x_n\}_{n \in \mathbb N}$ are such that \[w_{n+1} x_{n+1}=\lambda x_{n} \text{ for each } n \geq 1. \tag{$\star$}\] 
Therefore, assuming without loss of generality $x_{1}=1$, for each $n \in {\mathbb N}$, we have
\[x_{n+1} = \frac{{\lambda}^{n}}{w_{2} \cdots w_{n+1}}.\]
Then
 \[{\Vert x \Vert}_{p}^{p}   =   \sum_{n=1}^{\infty} {\vert x_{n} \vert}^p  = 1+ \sum_{n = 2}^{\infty} {\left( \frac{{\vert \lambda \vert}^{n-1}}{w_{2} \cdots w_{n}}  \right)}^p.\]
By applying the Cauchy-Hadamard criterion,  as 
\[\uplim_{n \rightarrow \infty} \sqrt[n]{\frac{{\vert \lambda \vert}^{n-1}}{w_{2} \cdots w_{n}}  }  = \frac{\vert \lambda \vert}{\tilde{r}},\]
we obtain the thesis.
\end{proof}

\subsection{The Bilateral Case}

In this section we focus on bilateral weighted shifts. We first generalize a result on the point spectrum proved, for the Hilbert case $p=2$, in \cite[Proposition 6.8 (b)]{Conway} and \cite[Theorem 9]{Shields}, to $1 \leq p < \infty$. Then, under some regularity hypotheses on the weight sequence, we deduce the spectrum.

\begin{prop} \label{propsigmap}
Let $\{w_n\}_{n \in \mathbb Z}$ be a bounded positive weight sequence. Assume that \[ \lim_{n \rightarrow \infty} w_n=w_+ \text{ and }  \lim_{n \rightarrow \infty} w_{-n}=w_-.\] Let $F_w$ and $B_w$ be the bilateral weighted forward shift and the bilateral weighted backward shift, respectively, on $\ell^p({\mathbb Z}) $, $1\leq p < \infty$. Then the following hold.
\begin{enumerate}
\item If $w_+ \leq w_-$, then $\{ \lambda : w_+ < \vert \lambda \vert < w_-\} \subseteq \sigma_p(F_w) \subseteq \{ \lambda : w_+ \leq \vert \lambda \vert \leq w_-\}$.
\item If $w_- < w_+$, then $\sigma_p(F_w)=\emptyset$.
\item If $w_- \leq w_+$, then $\{ \lambda : w_- < \vert \lambda \vert < w_+\} \subseteq \sigma_p(B_w) \subseteq \{ \lambda : w_- \leq \vert \lambda \vert \leq w_+\}$.
\item If $w_+ < w_-$, then $\sigma_p(B_w)=\emptyset$.
\end{enumerate}
\end{prop}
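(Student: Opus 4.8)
The plan is to reduce each of the four cases to the eigenvalue equation and a single application of the Cauchy--Hadamard criterion, the only analytic input being the elementary fact that the geometric mean of a convergent sequence of positive reals converges to its limit. So first I would record the auxiliary statement: if $a_k>0$ and $a_k\to a\ge 0$, then $(a_1\cdots a_n)^{1/n}\to a$; this follows by applying Ces\`aro's theorem to $\frac1n\sum_{k=1}^n\log a_k$, with the convention that $\log 0=-\infty$ (in which case the Ces\`aro means still tend to $-\infty$, so the geometric mean tends to $0$). Under the hypotheses of the proposition this gives $(w_0w_1\cdots w_{n-1})^{1/n}\to w_+$ and $(w_{-1}w_{-2}\cdots w_{-n})^{1/n}\to w_-$, and the same for any fixed shift of the indices.

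Next I turn to $F_w$ and parts (1)--(2). Suppose $\lambda$ is an eigenvalue with eigenvector $x=\{x_n\}_{n\in\Z}\ne 0$. The relation $F_wx=\lambda x$ reads $w_{n-1}x_{n-1}=\lambda x_n$ for all $n\in\Z$. If $\lambda=0$ then positivity of the weights forces $x_{n-1}=0$ for every $n$, a contradiction; hence $\lambda\ne 0$, the recursion $x_n=(w_{n-1}/\lambda)x_{n-1}$ propagates in both directions, so every $x_n\ne 0$ and we may normalise $x_0=1$. This determines $x$ completely:
\[
x_n=\frac{w_0w_1\cdots w_{n-1}}{\lambda^{\,n}}\quad(n\ge 1),\qquad
x_{-m}=\frac{\lambda^{\,m}}{w_{-m}\cdots w_{-1}}\quad(m\ge 1).
\]
Thus $\lambda\in\sigma_p(F_w)$ if and only if both series $\sum_{n\ge 1}(w_0\cdots w_{n-1})^p\,|\lambda|^{-np}$ and $\sum_{m\ge 1}|\lambda|^{mp}(w_{-m}\cdots w_{-1})^{-p}$ converge.

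By the root test together with the auxiliary limit, the $n$-th term of the first series has $n$-th root tending to $(w_+/|\lambda|)^p$ and the $m$-th term of the second has $m$-th root tending to $(|\lambda|/w_-)^p$ (read as $0$ or $+\infty$ in the degenerate cases $w_+=0$ or $w_-=0$). Hence both converge when $w_+<|\lambda|<w_-$, and at least one diverges when $|\lambda|<w_+$ or $|\lambda|>w_-$; this gives exactly the two inclusions of (1), and, when $w_-<w_+$, leaves no admissible $\lambda\ne 0$ at all, so $\sigma_p(F_w)=\emptyset$, which is (2). For $B_w$ I would deduce (3)--(4) from (1)--(2) by symmetry: the flip $J\in\mathcal L(\ell^p(\Z))$, $(Jx)_n=x_{-n}$, is an invertible isometry and a direct check gives $B_w=J^{-1}F_vJ$ with $v_k=w_{-k}$, so $v_n\to w_-$ and $v_{-n}\to w_+$; since similarity preserves the point spectrum, applying (1)--(2) to $F_v$ yields (3) when $w_-\le w_+$ and (4) when $w_+<w_-$. (Alternatively one repeats the computation directly: $B_wx=\lambda x$ gives $w_{n+1}x_{n+1}=\lambda x_n$, again $\lambda\ne 0$, $x_0=1$, $x_n=\lambda^n(w_1\cdots w_n)^{-1}$ for $n\ge 1$ and $x_{-m}=(w_{-m+1}\cdots w_0)\lambda^{-m}$ for $m\ge 1$, with root-test limits $(|\lambda|/w_+)^p$ and $(w_-/|\lambda|)^p$.)

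I do not expect a genuine obstacle here; the only points needing care are the geometric-mean limit lemma and its behaviour when $w_+=0$ or $w_-=0$, and the bookkeeping on the two boundary circles $|\lambda|=w_+$ and $|\lambda|=w_-$, where the root test is inconclusive --- which is precisely why the proposition asserts a pair of inclusions rather than an equality.
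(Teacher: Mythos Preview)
Your proof is correct and follows a cleaner path than the paper's. Both arguments begin identically by solving the eigenvalue equation to obtain the explicit candidate eigenvector, and both reduce the question to convergence of the two one-sided series $\sum_{n\ge 1}(w_0\cdots w_{n-1})^p|\lambda|^{-np}$ and $\sum_{m\ge 1}|\lambda|^{mp}(w_{-m}\cdots w_{-1})^{-p}$. From that point the paper proceeds by bare-hands $\epsilon$-estimates: it fixes $\epsilon$, splits each series at the index where the weights enter the $\epsilon$-band around the limit, and compares the tail to an explicit geometric series, repeating this four times (twice for the outer inclusion, twice for the inner) and then treating $w_+=0$ as a separate case. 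You instead isolate the single analytic fact that the geometric mean of a convergent positive sequence converges to its limit (via Ces\`aro on the logarithms), and feed it into the root test; this collapses all the estimates into a couple of lines and absorbs the degenerate cases $w_\pm=0$ without a case split. For $B_w$ the paper simply redoes the computation with the backward recursion; your conjugation by the flip $J$, giving $B_w=J^{-1}F_vJ$ with $v_k=w_{-k}$, is shorter and makes the symmetry between (1)--(2) and (3)--(4) structural rather than computational. The trade-off is that your argument invokes two named auxiliary results (Ces\`aro, root test) while the paper's is entirely self-contained from first principles; both approaches make equally transparent why the boundary circles $|\lambda|=w_\pm$ remain undecided.
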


\begin{proof}
(1).  We separate the two cases, Case 1.A: $w_+ >0$ and Case 1.B: $w_+=0$. \\

{\bf{Case 1.A}} Assume $w_+ >0$. We start by showing that $\sigma_p(F_w) \subseteq \{ \lambda : w_+ \leq \vert \lambda \vert \leq w_-\}$. Let $\lambda \in \sigma_p (F_w)$. Let $x=\{ x_n\}_{n \in \mathbb Z}$ be an eigenvector corresponding to $\lambda$, i.e. $F_w(\{x_n\}_{n \in \mathbb Z})=\{\lambda x_n\}_{n \in \mathbb Z}$. By definition, $F_w(\{x_n\}_{n \in \mathbb Z})=\{w_{n-1}x_{n-1}\}_{n \in \mathbb Z}$ and hence the coordinates of $x=\{ x_n\}_{n \in \mathbb Z}$ are such that \[w_{n-1} x_{n-1}=\lambda x_{n} \text{ for each } n \in {\mathbb Z}.\] Therefore, for each $n \in \mathbb N$, we have \[ x_n=\frac{w_0 \cdots w_{n-1}}{\lambda ^n} x_0\,\, \text{ and } \,\, x_{-n}=\frac{\lambda ^n}{w_{-n}\cdots w_{-1}} x_0.\]
Then
\begin{eqnarray*}
\Vert x \Vert_p^p =  \sum_{n=-\infty}^{+ \infty} \vert x_n \vert ^p & = & \sum_{n=1}^{+ \infty} \vert x_{-n} \vert ^p + \vert x_0 \vert ^p + \sum_{n=1}^{+ \infty} \vert x_n \vert ^p \\
&=& \sum_{n=1}^{+ \infty} \vert \lambda \vert^{pn}\vert x_0 \vert^p (w_{-n}\cdots w_{-1})^{-p} + \vert x_0\vert^p + \sum_{n=1}^{+ \infty} \vert \lambda \vert^{-pn} \vert x_0 \vert^p (w_{0}\cdots w_{n-1})^{p}\\
&=&\vert x_0\vert^p \left[\sum_{n=1}^{+ \infty} \vert \lambda \vert^{pn} (w_{-n}\cdots w_{-1})^{-p} + 1 + \sum_{n=1}^{+ \infty} \vert \lambda \vert^{-pn} (w_{0}\cdots w_{n-1})^{p} \right] \hspace{0.2cm} {(\spadesuit)}
 \end{eqnarray*}
 
 By contradiction, assume $\vert \lambda \vert < w_+$. Choose $\epsilon $ such that $\vert \lambda \vert < w_+ - \epsilon$. 
 As  $\lim_{n \rightarrow \infty} w_n=w_+$, let  $\overline{n}$ be so large that $w_n > w_+ - \epsilon$ for each $n\geq \overline{n}$. 
 Then, for each $n> \overline{n}$, \[w_{\overline{n}} \cdots w_{n-1} > (w_+ - \epsilon)^{n-1-\overline{n}+1}=(w_+ - \epsilon)^{n-\overline{n}}. \]
 Therefore
 \begin{eqnarray*}
 \infty > \Vert x \Vert _p^p & \geq &  \sum_{n=1}^{+ \infty} \vert \lambda \vert^{-pn} (w_{0}\cdots w_{n-1})^{p} \\
  & = &  \sum_{n=1}^{\overline{n}} \vert \lambda \vert^{-pn} (w_{0}\cdots w_{n-1})^{p} +  \sum_{n=\overline{n}+1}^{+ \infty} \vert \lambda \vert^{-pn} (w_{0}\cdots w_{\overline{n}-1} \cdots w_{n-1})^{p} \\
  &>& \sum_{n=1}^{\overline{n}} \vert \lambda \vert^{-pn} (w_{0}\cdots w_{n-1})^{p} + (w_{0}\cdots w_{\overline{n}-1})^p  \sum_{n=\overline{n}+1}^{+ \infty} \vert \lambda \vert^{-pn} (w_+ - \epsilon)^{p(n-\overline{n})}\\
  &=&  \sum_{n=1}^{\overline{n}} \vert \lambda \vert^{-pn} (w_{0}\cdots w_{n-1})^{p} + (w_{0}\cdots w_{\overline{n}-1})^p (w_+ - \epsilon)^{-p\overline{n}}  \sum_{n=\overline{n}+1}^{+ \infty} \vert \lambda \vert^{-pn} (w_+ - \epsilon)^{pn}.\\
 \end{eqnarray*}
 
As $\vert \lambda \vert < w_+ - \epsilon$, then the geometric series on the right diverges: this is a contradiction. Hence, it must be $\vert \lambda \vert \geq w_+$.  \\
Analogously, by contradiction, assume $\vert \lambda \vert > w_-$. Choose $\epsilon $ such that $\vert \lambda \vert > w_- + \epsilon$.  As $\lim_{n \rightarrow \infty} w_{-n}=w_-$, let $\tilde{n}$ 
be so large that $w_{-n} < w_- + \epsilon$ for each $n\geq \tilde{n}$. 
Then, for each $n> \tilde{n}$, 
\[w_{-\tilde{n}-1} \cdots w_{-n} < (w_- + \epsilon)^{n-\tilde{n}}. \]
Therefore 
 \begin{eqnarray*}
 \infty > \Vert x \Vert _p^p & \geq & \sum_{n=1}^{+ \infty} \vert \lambda \vert^{pn} (w_{-n}\cdots w_{-1})^{-p} \\
  & = &  \sum_{n=1}^{\tilde{n}} \vert \lambda \vert^{pn} (w_{-n}\cdots w_{-1})^{-p} +  \sum_{n=\tilde{n}+1}^{+ \infty} \vert \lambda \vert^{pn} (w_{-n}\cdots w_{-\tilde{n}} \cdots w_{-1})^{-p} \\
  &>& \sum_{n=1}^{\tilde{n}} \vert \lambda \vert^{pn} (w_{-n}\cdots w_{-1})^{-p} + (w_{-\tilde{n}}\cdots w_{-1})^{-p}  \sum_{n=\tilde{n}+1}^{+ \infty} \vert \lambda \vert^{pn} (w_- + \epsilon)^{-p(n-\tilde{n})}\\
  &=& \sum_{n=1}^{\tilde{n}} \vert \lambda \vert^{pn} (w_{-n}\cdots w_{-1})^{-p} + (w_{-\tilde{n}}\cdots w_{-1})^{-p} (w_- + \epsilon)^{p\tilde{n}}  \sum_{n=\tilde{n}+1}^{+ \infty} \vert \lambda \vert^{pn} (w_- + \epsilon)^{-pn}.\\
 \end{eqnarray*}
 
 As $\vert \lambda \vert > w_- + \epsilon$, then the geometric series on the right diverges.  This is a contradiction. So, it must be $\vert \lambda \vert \leq w_-$.
Hence, we have just proved that, in the case $w_+>0$, it is \[\sigma_p(F_w) \subseteq \{ \lambda : w_+ \leq \vert \lambda \vert \leq w_-\}.\] 
Now, we prove the first inclusion in (1). That is, we show that if $\lambda$ is such that $w_+ < \vert \lambda \vert < w_-$, then  $\lambda$ is an eigenvalue of the vector ${x}=\{x_n\}_{n \in \mathbb Z}$ 
defined by choosing $x_0 \neq 0$ and $w_{n-1} x_{n-1}=\lambda x_{n}$  for each  $n \in {\mathbb Z}$. In fact, choose $\epsilon$ such that $w_+ + \epsilon < \vert \lambda \vert <  w_- - \epsilon$. 

Let  $\overline{n}$ be so large that $w_- - \epsilon < w_{- n}$ and $w_{n} < w_+ + \epsilon$ for each $n\geq \overline{n}$. 
 Then, for each $n> \overline{n}$, \[w_{\overline{n}} \cdots w_{n-1} < (w_+ + \epsilon)^{n-1-\overline{n}+1}=(w_+ + \epsilon)^{n-\overline{n}} \]
 and 
 \[w_{-\tilde{n}-1} \cdots w_{-n} > (w_- - \epsilon)^{n-\tilde{n}}. \]
 Therefore
 \begin{eqnarray*}
  \sum_{n=1}^{+ \infty} \vert \lambda \vert^{-pn} (w_{0}\cdots w_{n-1})^{p} & = &  \sum_{n=1}^{\overline{n}} \vert \lambda \vert^{-pn} (w_{0}\cdots w_{n-1})^{p} +  
  \sum_{n=\overline{n}+1}^{+ \infty} \vert \lambda \vert^{-pn} (w_{0}\cdots w_{\overline{n}-1} \cdots w_{n-1})^{p} \\
  &<& \sum_{n=1}^{\overline{n}} \vert \lambda \vert^{-pn} (w_{0}\cdots w_{n-1})^{p} + (w_{0}\cdots w_{\overline{n}-1})^p  \sum_{n=\overline{n}+1}^{+ \infty} \vert \lambda \vert^{-pn} (w_+ + \epsilon)^{p(n-\overline{n})}\\
  &=&  \sum_{n=1}^{\overline{n}} \vert \lambda \vert^{-pn} (w_{0}\cdots w_{n-1})^{p} + \\
  & &  + (w_{0}\cdots w_{\overline{n}-1})^p (w_+ + \epsilon)^{-p\overline{n}}  \sum_{n=\overline{n}+1}^{+ \infty} \vert \lambda \vert^{-pn} (w_+ + \epsilon)^{pn}\\
  & <& \infty \hspace{0.5cm} (\text{as } \vert \lambda \vert > w_+ + \epsilon)
 \end{eqnarray*}
 
 and 
 \begin{eqnarray*}
  \sum_{n=1}^{+ \infty} \vert \lambda \vert^{pn} (w_{-n}\cdots w_{-1})^{-p} & = &  \sum_{n=1}^{\tilde{n}} \vert \lambda \vert^{pn} (w_{-n}\cdots w_{-1})^{-p} +  \sum_{n=\tilde{n}+1}^{+ \infty} \vert \lambda \vert^{pn} (w_{-n}\cdots w_{-\tilde{n}} \cdots w_{-1})^{-p} \\
  &<& \sum_{n=1}^{\tilde{n}} \vert \lambda \vert^{pn} (w_{-n}\cdots w_{-1})^{-p} + \\
  & & + (w_{-\tilde{n}}\cdots w_{-1})^{-p}  \sum_{n=\tilde{n}+1}^{+ \infty} \vert \lambda \vert^{pn} (w_- - \epsilon)^{-p(n-\tilde{n})}\\
  &=& \sum_{n=1}^{\tilde{n}} \vert \lambda \vert^{pn} (w_{-n}\cdots w_{-1})^{-p} + \\
  & &  + (w_{-\tilde{n}}\cdots w_{-1})^{-p} (w_- - \epsilon)^{p\tilde{n}}  \sum_{n=\tilde{n}+1}^{+ \infty} \vert \lambda \vert^{pn} (w_- - \epsilon)^{-pn}\\
   & <& \infty \hspace{0.5cm} (\text{as } \vert \lambda \vert < w_- - \epsilon).
 \end{eqnarray*}
It follows from $(\spadesuit)$ that $\Vert x \Vert_p < \infty$. We conclude that the vector ${x}=\{x_n\}_{n \in \mathbb Z}$ is such that
\begin{itemize}
\item{$w_{n-1} x_{n-1}=\lambda x_{n}$  for each  $n \in {\mathbb Z}$;}
\item{${x} \in \ell^p({\mathbb Z})\setminus \{0\};$ }
\end{itemize}
i.e. it is an eigenvector of $\lambda$. Hence \[\{ \lambda : w_+ < \vert \lambda \vert < w_-\} \subseteq \sigma_p(F_w). \]

{\bf{Case 1.B}} Assume $w_+=0.$ We distinguish the cases $w_- \neq 0$ and $w_-=0$. If $w_- \neq 0$, and if $\lambda \in \sigma_p(F_w)$, from the previous computation and the fact that $\vert \lambda \vert \geq 0=w_+$, it follows that $\sigma_p(F_w) \subseteq \{ \lambda : 0 \leq \vert \lambda \vert \leq w_-\}$. Moreover, a similar computation shows that  $\{ \lambda : 0 < \vert \lambda \vert < w_-\} \subseteq \sigma_p(F_w)$. \\
If $w_-=0$, note that $\{ \lambda : w_+ < \vert \lambda \vert < w_-\} = \emptyset \subseteq \sigma_p(F_w).$ Moreover, if $\lambda \in \sigma_p(F_w)$, a computation similar to above shows that $\vert \lambda \vert =0$, i.e. $\sigma_p(F_w) \subseteq \{0\}.$ \\

For the proof of (2), just note that if $w_- < w_+$, then, arguing as above, we obtain $\sigma_p(F_w)=\emptyset$.\\

(3) Assume $w_- \leq w_+$. As in (1), $\lambda \in \sigma_p(B_w)$ if and only if there exists $\{x_n\}_{n \in \mathbb Z} \in \ell^{p}(\mathbb Z) \setminus \{0\}$ such that $B_w(\{x_n\}_{n \in \mathbb Z})=\{\lambda x_n\}_{n \in \mathbb Z}$. On the other hand, $B_w(\{x_n\}_{n \in \mathbb Z})=\{w_{n+1}x_{n+1}\}_{n \in \mathbb Z}$.
Hence, the component of $x=\{ x_n\}_{n \in \mathbb Z}$ are such that \[w_{n+1} x_{n+1}=\lambda x_{n} \text{ for each } n \in {\mathbb Z}.\] Therefore, for each $n \in \mathbb N$ we have: \[ x_n=\frac{\lambda ^n}{w_1 \cdots w_{n}} x_0\,\, \text{ and } \,\, x_{-n}=\frac{w_{-n+1}\cdots w_{0}}{\lambda ^n} x_0.\]
We want to determine $\lambda$ such that $\Vert x \Vert_p<\infty$. Arguing as in (1), with a similar computation we obtain that, if $w_- \leq w_+$ then  \[ \{ \lambda : w_- < \vert \lambda \vert < w_+\} \subseteq \sigma_p(B_w)\subseteq \{ \lambda : w_- \leq \vert \lambda \vert \leq w_+\}.\]
The implication (4) follows noting that if $w_- > w_+$, then, by arguing as in (3), we obtain $\sigma_p(B_w)=\emptyset$.
\end{proof}

\begin{prop} \label{corspect}
Let $\{w_n\}_{n \in \mathbb Z}$ be a bounded positive weight sequence. Let \[ \lim_{n \rightarrow +\infty} w_n=w_+ \text{ and }  \lim_{n \rightarrow \infty} w_{-n}=w_-.\]  
Let $T$ be the bilateral weighted shift $F_w$ or $B_w$, on $X=\ell^p({\mathbb Z}) $, $1\leq p < \infty$. Then the followings hold.

\begin{itemize}
\item[a)] If $T$ is invertible, then \[ \sigma(T)=\{ \lambda : \min\{w_-,w_+\} \leq \vert \lambda \vert \leq \max\{w_-,w_+\}\}.\] 
\item[b)] If $T$ is not invertible, then \[\sigma(T)=\{ \lambda : \vert \lambda \vert \leq \max\{w_-,w_+\} \}.\]
\end{itemize}
\end{prop}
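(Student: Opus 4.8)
The plan is to deduce the statement from Proposition \ref{propspectrumforward2} by computing the relevant spectral radii. The key step is a bilateral analogue of Proposition \ref{propunilat}: namely, that under the present hypotheses
\[ r(T) = \max\{w_-, w_+\}. \]
Granting this, part b) is immediate from Proposition \ref{propspectrumforward2} a). For part a), if $T$ is invertible then $\inf_{n \in \mathbb Z} w_n > 0$, so $w_-$ and $w_+$ are strictly positive, and (using $B_w^{-1} = F_{\tilde w}$ and $F_w^{-1} = B_{\tilde w}$ with $\tilde w = \{1/w_n\}_{n \in \mathbb Z}$) the operator $T^{-1}$ is again a bilateral weighted shift whose weight sequence is bounded and satisfies $\lim_{n \to \infty} \tilde w_n = 1/w_+$ and $\lim_{n \to \infty} \tilde w_{-n} = 1/w_-$. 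Applying the key step to $T^{-1}$ gives $r(T^{-1}) = \max\{1/w_-, 1/w_+\} = 1/\min\{w_-,w_+\}$, and then Proposition \ref{propspectrumforward2} b) yields exactly $\sigma(T) = \{\lambda : \min\{w_-,w_+\} \leq |\lambda| \leq \max\{w_-,w_+\}\}$. So the whole proof reduces to the displayed identity for $r(T)$.

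To prove that identity I would argue as in the proof of Proposition \ref{propunilat}. By $(1)$ of Remark \ref{rmknorm} and the spectral radius formula,
\[ r(T) = \lim_{n \to \infty} \|T^n\|^{1/n} = \lim_{n \to \infty} \Big( \sup_{k \in \mathbb Z} w_k w_{k+1} \cdots w_{k+n-1} \Big)^{1/n}, \]
which is the same for $T = F_w$ and $T = B_w$. Write $w := \max\{w_-, w_+\}$ and $M := \sup_n w_n$. For the upper bound $r(T) \leq w$: given $\epsilon > 0$, choose $N$ with $w_j < w + \epsilon$ for all $|j| \geq N$; any block $w_k \cdots w_{k+n-1}$ of $n$ consecutive weights contains at most $2N-1$ factors with index in $(-N,N)$, each of these bounded by $M$, and all remaining factors bounded by $w + \epsilon$, so for $n$ large $w_k \cdots w_{k+n-1} \leq (M')^{2N-1}(w+\epsilon)^{n-2N+1}$ uniformly in $k$, where $M' := \max\{M, w+\epsilon\}$; taking $n$-th roots and letting $n \to \infty$ gives $r(T) \leq w + \epsilon$, hence $r(T) \leq w$. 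For the lower bound $r(T) \geq w$: since $w_j \to w_+$, for each $\epsilon > 0$ there is $N_+$ with $w_j > w_+ - \epsilon$ for $j \geq N_+$, whence $\sup_k w_k \cdots w_{k+n-1} \geq w_{N_+} \cdots w_{N_+ + n - 1} > (w_+ - \epsilon)^n$ and so $r(T) \geq w_+ - \epsilon$, i.e.\ $r(T) \geq w_+$; the symmetric estimate using $w_{-j} \to w_-$ gives $r(T) \geq w_-$ (both bounds being trivially true when the relevant limit is $0$), hence $r(T) \geq \max\{w_-,w_+\}$.

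The main obstacle is the uniform-in-$k$ upper bound in the key step: the point is that a window of $n$ consecutive integers meets the finite transition set $\{j : |j| < N\}$ in at most $2N-1$ points, so that in every product of consecutive weights all but a bounded number of factors are already controlled by $w + \epsilon$. This is precisely the two-sided refinement of the one-sided estimate appearing in the proof of Proposition \ref{propunilat}; once it is in place the rest is routine, the only other thing worth spelling out being the observation that $T^{-1}$ is a weighted shift with weights $\{1/w_n\}$ and reciprocal limits, so that the invertible case follows by applying the key step twice — to $T$ and to $T^{-1}$.
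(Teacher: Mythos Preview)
Your proof is correct but obtains the reverse inclusion differently from the paper. Both arguments begin the same way: using Remark~\ref{rmknorm} and the spectral radius formula to bound $r(T)$ from above by $\max\{w_-,w_+\}$ (and, in the invertible case, $r(T^{-1})$ from above by $1/\min\{w_-,w_+\}$) via a uniform-in-$k$ block estimate. The paper, however, does \emph{not} prove the matching lower bound for $r(T)$ directly. Instead it appeals to Proposition~\ref{propsigmap}: the open annulus $\{\min\{w_-,w_+\}<|\lambda|<\max\{w_-,w_+\}\}$ lies in $\sigma_p(F_w)$ or $\sigma_p(B_w)$ (depending on the ordering of $w_-,w_+$), hence in $\sigma(T)$, and compactness of $\sigma(T)$ then supplies the missing inclusion. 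Your argument replaces this appeal to the point spectrum by the elementary observation that a tail block of weights on the positive (resp.\ negative) side already exceeds $(w_+-\epsilon)^n$ (resp.\ $(w_--\epsilon)^n$), which gives $r(T)\geq\max\{w_-,w_+\}$ directly; applying the same to $T^{-1}$ finishes the invertible case. This makes your proof independent of Proposition~\ref{propsigmap} and slightly more self-contained, while the paper's route has the virtue of exhibiting the entire open annulus inside the \emph{point} spectrum, in keeping with the theme of the section. The paper also singles out the case $\min\{w_-,w_+\}=0$ and handles it via compactness of the shift, whereas your argument covers it uniformly.
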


\begin{proof}

By using Proposition \ref{propspectrumforward2}, as $\sigma(F_w)=\sigma(B_w)$, hence without loss of generality we can consider $T=F_w$.
We may assume that $\min\{w_-,w_+\} \neq 0,$ otherwise, by Proposition \ref{compactshift}, $F_w$ is compact and hence every nonzero $\lambda \in \sigma(F_w)$ is an eigenvalue of $F_w$, i.e., $\sigma(F_w)=\{0\}\cup \sigma_p(F_w)$ \cite[Proposition 7.1]{Conway2}.

Hence, let $\min\{w_-,w_+\} \neq 0.$ Note that, by hypotheses, for each $\epsilon >0$, there exist $\overline{n}, \overline{\overline{n}} \in {\mathbb N}$ such that, 
 \[w_+-\epsilon < {w}_n<  w_+ +\epsilon, \forall n\geq \overline{n}\]
and
 \[w_- -\epsilon < {w}_{-n} <  w_- +\epsilon, \forall n\geq \overline{\overline{n}}.\]
Let $N=\max\{ \overline{n}, \overline{\overline{n}}  \}$.
Note that 
\begin{align*}
\vert w_k \dots  w_{k+n-1} \vert^{\frac{1}{n}} &= ( w_k \dots  w_{k+n-1} )^{\frac{1}{n}} \\
&< \begin{cases}
\max\{ w_+ + \epsilon, w_- + \epsilon\}^{\frac{n}{n}}  & \text{if } \vert k \vert >N \\
(\underset{{k \in [-N,N]}}{\sup} w_k) ^{\frac{h +1}{n}} \cdot \max\{ w_+ + \epsilon, w_- + \epsilon\}^{\frac{n-h-1}{n}} & \text{if } \vert k \vert \leq N, k+h=N 
\end{cases} \\
&= 
\begin{cases}
\max\{ w_+ + \epsilon, w_- + \epsilon\}  & \text{if } \vert k \vert >N \\
(\underset{{k \in [-N,N]}}{\sup} w_k )^{\frac{h +1}{n}} \cdot \max\{ w_+ + \epsilon, w_- + \epsilon\}^{1-\frac{h+1}{n}} & \text{if } \vert k \vert \leq N, k+h=N 
\end{cases} 
\end{align*}
Hence,
\begin{align*}
r(F_w)&= \lim_{n \rightarrow \infty} \Vert F_w^n\Vert^{\frac{1}{n}}=  \lim_{n \rightarrow \infty} \sup_{k \in \mathbb Z} \vert w_k \dots  w_{k+n-1} \vert^{\frac{1}{n}} =\lim_{n \rightarrow \infty} \sup_{k \in \mathbb Z}( w_k \dots  w_{k+n-1})^{\frac{1}{n}} \\
&\leq \lim_{n \rightarrow \infty} \max \left \{ \max\{ w_+ + \epsilon, w_- + \epsilon\} ; (\underset{{k \in [-N,N]}}{\sup} w_k) ^{\frac{h +1}{n}} \cdot \max\{ w_+ + \epsilon, w_- + \epsilon\}^{1-\frac{h+1}{n}} \right \}\\
&= \max\{ w_+ + \epsilon, w_- + \epsilon\}\\
&= \max\{ w_+ , w_- \} + \epsilon \\ 
\end{align*}
Analogously,
\begin{align*}
\vert w_k \dots  w_{k+n-1} \vert^{\frac{1}{n}} &= ( w_k \dots  w_{k+n-1} )^{\frac{1}{n}} \\
&>
\begin{cases}
\min\{ w_+ - \epsilon, w_- - \epsilon\}^{\frac{n}{n}}  & \text{if } \vert k \vert >N \\
(\underset{{k \in [-N,N]}}{\inf}  w_k ) ^{\frac{h +1}{n}} \cdot \min\{ w_+ - \epsilon, w_- - \epsilon\}^{\frac{n-h-1}{n}} & \text{if } \vert k \vert \leq N, k+h=N 
\end{cases} \\
&=
\begin{cases}
\min\{ w_+ - \epsilon, w_- - \epsilon\}  & \text{if } \vert k \vert >N \\
(\underset{{k \in [-N,N]}}{\inf} w_k )^{\frac{h +1}{n}} \cdot \min\{ w_+ - \epsilon, w_- - \epsilon\}^{1-\frac{h+1}{n}} & \text{if } \vert k \vert \leq N, k+h=N 
\end{cases} 
\end{align*}
Hence,
\begin{align*}
r(F_w^{-1})&= \lim_{n \rightarrow \infty} \Vert F_w^{-n}\Vert^{\frac{1}{n}}=  \lim_{n \rightarrow \infty} \sup_{k \in \mathbb Z} \left \vert \dfrac{1}{w_k \dots  w_{k+n-1}} \right \vert^{\frac{1}{n}} =\lim_{n \rightarrow \infty} \sup_{k \in \mathbb Z} \left ( \dfrac{1}{w_k \dots  w_{k+n-1}} \right )^{\frac{1}{n}} \\
&\leq \lim_{n \rightarrow \infty} \max \left \{ \dfrac{1}{\min\{ w_+ - \epsilon, w_- - \epsilon\}} ;  \dfrac{1}{(\underset{{k \in [-N,N]}}{\inf}  w_k )^{\frac{h +1}{n}} \cdot \min\{ w_+ - \epsilon, w_- - \epsilon\}^{1-\frac{h+1}{n}}}\right  \}\\
&= \dfrac{1}{\min\{ w_+ - \epsilon, w_- - \epsilon\}} \\
&=\dfrac{1}{\min\{ w_+ , w_- \} - \epsilon} \\ 
\end{align*}
From the above computations it follows, by the arbitrariness of $\epsilon >0$, that $r(F_w) \leq \max\{ w_+ , w_- \}$ and $r(F_w^{-1})^{-1} \geq \min\{ w_+ , w_- \}$. \\
Hence, by using Proposition \ref{propspectrumforward2}, one can get that 
\begin{itemize}
\item if $F_w$ is invertible, then
 \[ \sigma(F_w)= \{ \lambda :  r(F_w^{-1})^{-1} \leq \vert \lambda \vert \leq r(F_w) \} \subseteq \{ \lambda : \min\{w_-,w_+\} \leq \vert \lambda \vert \leq \max\{w_-,w_+\}\},\] 
 \item if $F_w$ is not invertible, then 
 \[\sigma(F_w)= \{ \lambda :  \vert \lambda \vert \leq r(F_w) \} \subseteq \{ \lambda : \vert \lambda \vert \leq \max\{w_-,w_+\} \}.\] 
 \end{itemize}
As  $\sigma(F_w)$ ($\sigma(B_w)$) is compact and $\sigma_p(F_w) \subseteq \sigma(F_w)$ ($\sigma_p(B_w) \subseteq \sigma(B_w)$), then the thesis follows by applying statements (1) and (3) of Proposition \ref{propsigmap}. 
\end{proof}

\begin{rmk}
Let $X=\ell^p({\mathbb Z}) $, $1\leq p < \infty$, and let $T$ denote a bilateral weighted shift $F_w$ or $B_w$, on $X$, 
with $\{w_n\}_{n \in \mathbb Z}$ a positive weight sequence. Let \[\uplim_{n \rightarrow \infty}w_n=w_+^+; \uplim_{n \rightarrow \infty}w_{-n}=w_-^+\]
and \[\lowlim_{n \rightarrow \infty}w_n=w_+^-; \lowlim_{n \rightarrow \infty}w_{-n}=w_-^-.\] Note that the Proposition \ref{corspect} cannot be improved by replacing $\min \{w_-,w_+\}$ with $\min \{w_-^-,w_+^-\}$ and $\max \{w_-,w_+\}$ with $\max \{w_-^+,w_+^+\}$\\
To see this, take for example
\[w_{n} =\begin{cases} 3^{{(-1)}^{n}} & \text{if }  n \geq 0 \\
2^{{(-1)}^{n}} & \text{if } n \leq -1.
\end{cases}\] 
Then, by Remark \ref{rmknorm} and by the spectral radius formula,
 \begin{eqnarray*}
 r(T) &=&\lim_{n \rightarrow \infty}  
 {\left[\sup_{k \in \Bbb N}( w_kw_{k+1} \cdots w_{k+n-1}) \right]}^{\frac{1}{n}}=1 =r(T^{-1})^{-1}.
 \end{eqnarray*}
Clearly, $\{w_{-n}\}_{n \in \mathbb N}$  and $\{w_n\}_{n \in \mathbb N}$ are not regular, with $w_-^-=\frac{1}{2}$, $w_-^+=2$, $w_+^-=\frac{1}{3}$, $w_+^+=3$, and hence $\max \{w_-^+,w_+^+\}=3$ and $\min \{w_+^-,w_-^-\}=\frac{1}{3}.$
\end{rmk}

We conclude with a very simple example, as an application of Proposition \ref{corspect}. 

\begin{exmp} Let $\{w_{n}\}_{n \in {\mathbb Z}}$ be so defined: 
\[w_{n}= \left\{ \begin{array}{ll}
3 +\frac{1}{1+n} & \text{ if } n \geq 0\\
\frac{1}{2} -  \frac{1}{n}  & \text{ if } n \leq -1\\
\end{array}
\right.\] 
Let $T= F_{w}$ or $B_w$. Then, clearly, $T$ is invertible and $\max \{w_-,w_+\}=3$, $\min \{w_-,w_+\}=\frac{1}{2}$. Hence, by Proposition \ref{corspect}, 
\[ \sigma(T) =\left \{ \lambda : \frac{1}{2} \leq \vert \lambda \vert \leq 3 \right \}.\] 
\end{exmp}

\subsection*{Acknowledgements.} This research has been partially supported by the \emph{``Gruppo Nazionale per l'Analisi Matematica, la Probabilità e le loro Applicazioni dell'Is\-tituto Nazionale di Alta Matematica F. Severi''} and by the project \emph{Vain-Hopes} within the program \emph{Valere} of \emph{Università degli Studi della Campania ``Luigi Vanvitelli''}. It has also been partially accomplished within the \emph{UMI} Group \emph{TAA ``Approximation Theory and Applications''}

\bibliographystyle{siam}
\bibliography{biblio}

\Addresses

\end{document}